\newtheorem{theorem}{Theorem}[section]
\newtheorem{lemma}[theorem]{Lemma}
\newtheorem{proposition}[theorem]{Proposition}
\theoremstyle{definition}
\newtheorem{definition}[theorem]{Definition}
\newtheorem{example}[theorem]{Example}
\theoremstyle{remark}
\newtheorem{remark}[theorem]{Remark}
\numberwithin{equation}{section}
\newcommand{\Ham}{\mathrm{Ham}}
\newcommand{\R}{\mathbb{R}}
\def\Z{\mathbb Z}
\title{The Simplicity of the Group of Weakly Hamiltonian Diffeomorphisms on Cosymplectic Manifolds}
\author[1]{ Pierre Bikorimana \thanks{pierrebikorimana@gmail.com}}
\author[2]{Stephane Tchuiaga \thanks{tchuiaga.kameni@ubuea.cm}}
\affil[1]{D\'epartement de Math\'ematiques, Universit\'e Marien Ngouabi, Brazzaville, Congo}
\affil[2]{Department of Mathematics, University of Buea, South West Region, Cameroon}
\date{ }
\begin{document}
	
	\maketitle
	
	\begin{abstract}
		We establish a cosymplectic counterpart of Banyaga's theorem by proving that the group of weakly Hamiltonian diffeomorphisms, $\Ham_{\eta,\omega}(M)$, is simple on any closed cosymplectic manifold. A key structural result, derived from Lie group theory, provides the foundation for our argument: the Reeb flow on any closed cosymplectic manifold is always periodic. This property, in turn, forces the associated flux group to be discrete. Building on this discrete invariant, we develop the essential fragmentation and transitivity principles needed to prove perfectness and simplicity. Beyond this algebraic framework, we recover Li's result realizing closed cosymplectic manifolds as symplectic mapping tori, and we establish a Liouville-type integrability theorem for Hamiltonian systems invariant under the Reeb flow, producing $(n+1)$-dimensional invariant tori. Finally, we characterize the commutator subgroup of the full cosymplectomorphism group as $\Ham_{\eta,\omega}(M)$.

	\end{abstract}

	\ \\
	{\bf Keywords:} Cosymplectic geometry, weakly Hamiltonian diffeomorphisms, flux geometry, fragmentation, simplicity, integrability, mapping torus.\\
	\textbf{2000 Mathematics Subject Classification:} 53D10, 20E32, 37J35, 20F05.
	
	\section{Introduction}\label{sec: intro}
	For many years now, the study of transformation groups in geometry and topology has, in many respects, founded modern mathematics, and symplectic and contact manifolds have lain fruitful ground for the development of structures dynamical and algebraical.
	Among these groups, however, the case of Hamiltonian diffeomorphisms-the ones preserving a symplectic structure under Hamiltonian flows-has been particularly attracting due especially to intrinsic links it possesses with phenomena in physics and dynamics but also with algebraic characteristics such as simplicity. Banyaga's celebrated theorem~\cite{Banyaga} showed that for closed symplectic manifolds, the kernel of the flux homomorphism is a simple group based on a dichotomy of fragmentation and transitivity.
	In this case, we extend this framework to cosymplectic manifolds, i.e., odd-dimensional analogues of symplectic manifolds, equipped with a closed 1-form $\eta$ and a closed 2-form $\omega$ such that $\eta \wedge \omega^n
	\neq 0$.
	These structures arise quite naturally in mechanical systems with time-dependent and periodic constraints~\cite{deLeon}, ~\cite{Libermann}, whereas they inherit some features of contact and symplectic geometries. One of the main drawbacks is that there is a particular Reeb vector field $\xi$, dual to $\eta$, giving the system a direction intrinsic to the design and not purely visible in symplectic cases. Given a compact connected cosymplectic manifold $(M, \eta, \omega)$ with a periodic Reeb flow (this periodicity assumption always holds for any closed connected cosymplectic manifold), we study the group $\Ham_{\eta, \omega}(M)$ of weakly Hamiltonian diffeomorphisms. This hinges on four pillars:
	\begin{enumerate}
		\item \text{Fragmentation:} Any element of $\Ham_{\eta, \omega}(M)$ fragments into localized diffeomorphisms supported in cosymplectic charts (Lemma \ref{Lem-frag}).
		\item \text{Transitivity:} The group acts transitively on cosymplectic charts, enabling the conjugation of localized elements all over the manifold (Theorem \ref{Theo-tran}).
		\item \text{Perfectness:} Any weakly Hamiltonian diffeomorphism can be expressed as a product of commutators (Theorem \ref{Reeb-per}).
		\item \text{Simplicity:} The group  $\Ham_{\eta, \omega}(M)$ is simple for any closed connected cosymplectic manifold (Theorem \ref{Theo-sim}).
	\end{enumerate}
	Periodic Reeb flow underlies our proof, since it ensures the discreteness of the flux group $\Gamma_{\eta, \omega}$ (Theorem \ref{Theo-disc}), the counterpart in symplectic geometry to the flux group corresponding to the Calabi homomorphism. Integrating techniques from Banyaga's symplectic framework~\cite{Banyaga} with relevant modifications to the Reeb-invariant background, we prove that within the normal subgroups belonging to $\Ham_{\eta, \omega}(M)$, there cannot be any which are non-trivial. 
	This article presents symplectic and contact methods in union, hence shedding light on the algebraic structures of transformation groups in mixed geometric spaces. Some of the applications comprise classifying the dynamics of cosymplectic manifolds and making a foundational contribution to a new area known as cosymplectic topology~ \cite{T-al}, \cite{Li}, \cite{Tchuiaga2023}.
	
	\subsection*{Statement of the main results}
	\begin{itemize}
		\item Theorem (Fragmentation lemma for cosymplectic manifolds):
		\textit{Let \((M, \eta, \omega)\) be a compact connected cosymplectic manifold with a periodic Reeb flow. Every \(\phi \in \Ham_{\eta,\omega}(M)\) can be decomposed as \(\phi = \phi_1 \circ \cdots \circ \phi_N\), where each \(\phi_j \in G_{i_j}\) is a weakly Hamiltonian diffeomorphism supported in a cosymplectic chart \(U_{i_j}\)} (Theorem \ref{Lem-frag}).
		\item Theorem (Transitivity on cosymplectic charts):
		\textit{Let \((M, \eta, \omega)\) be a compact connected cosymplectic manifold with a periodic Reeb flow. The group \(\Ham_{\eta,\omega}(M)\) acts transitively on the set of cosymplectic charts \(\{U_i\}\). For any \(U_1, U_2 \subset M\), there exists \(\phi \in \Ham_{\eta,\omega}(M)\) such that \(\phi(U_1) = U_2\)} (Theorem \ref{Theo-tran}).
		\item Theorem (Discreteness of \(\Gamma_{\eta,\omega}\)):
		\textit{Let \((M, \eta, \omega)\) be a compact connected cosymplectic manifold with a periodic Reeb flow of period \(T\). Then the flux group \(\Gamma_{\eta,\omega}\) is discrete in \(H^1_{\mathrm{Reeb}}(M, \mathbb{R})\)} (Theorem \ref{Theo-disc}).
		\item Theorem (Simplicity of \(\Ham_{\eta,\omega}(M)\)):
		\textit{The group \(\Ham_{\eta,\omega}(M)\) is simple, i.e., it has no proper nontrivial normal subgroups} (Theorem \ref{Theo-sim}).
		\item Theorem (Perfectness of weakly Hamiltonian diffeomorphisms):
		\textit{Let \((M, \eta, \omega)\) be a compact connected cosymplectic manifold with a periodic Reeb flow. The group \(\Ham_{\eta,\omega}(M)\) is perfect: every \(\phi \in \Ham_{\eta,\omega}(M)\) can be expressed as a product of commutators} (Theorem \ref{Theo-perf}).
		
		\item Theorem (Integrability on compact cosymplectic manifolds):
		\textit{Let \((M,\eta,\omega)\) be a compact cosymplectic manifold with a periodic Reeb flow of period \(T > 0\). Then:
			\begin{enumerate}
				\item \(M\) admits a smooth \(S^1\)-principal bundle structure \(\pi: M \to B\), where \(B\) is the base manifold (i.e. the leaf space of the Reeb foliation).
				\item For any \(S^1\)-invariant Hamiltonian \(H:M\to \mathbb{R}\), the dynamics restricted to the symplectic leaves of \(\ker(\eta)\) possess \(n\) independent Poisson-commuting first integrals \(\{I_1,\ldots,I_n\}\) where \(2n=\dim(\ker(\eta))\).
				\item The full system is Liouville integrable, with an additional periodic action variable coming from the Reeb flow. The invariant tori are \((n+1)\)-dimensional, fibering \(M\) into Lagrangian submanifolds.
		\end{enumerate}} (Theorem \ref{thm:integrability_cosymplectic}).
	\end{itemize}
	
	Organization of the paper:  After reviewing the necessary preliminaries on cosymplectic geometry in Section \ref{Cosymp:setting}, we establish in Section \ref{New: results} the cornerstone of our work: the periodicity of the Reeb flow on any closed cosymplectic manifold. This structural theorem ensures the discreteness of the flux group and enables us to prove a fragmentation lemma and a transitivity principle, which are the key ingredients for proving our main algebraic results: the perfectness and simplicity of the weakly Hamiltonian group \(\Ham_{\eta,\omega}(M)\). In the subsequent sections, we explore the consequences of this framework. We demonstrate that every closed cosymplectic manifold is a symplectic mapping torus and derive a Liouville-type integrability theorem for Reeb-invariant dynamics (Section \ref{sec:integrability}). We conclude in Section \ref{Group:Flux} by analyzing the flux group as a fundamental cosymplectic invariant and characterizing the commutator subgroup of the full cosymplectomorphism group. Technical analysis are collected in an Appendix.
	
	\section{Cosymplectic settings}\label{Cosymp:setting}
	A cosymplectic manifold \((M, \eta, \omega)\) of dimension \(2n+1\) is defined by a pair of closed forms \((\eta, \omega)\) where \(\eta\) is a 1-form and \(\omega\) is a 2-form such that \(\eta \wedge \omega^n\) is a volume form. This structure induces a canonical splitting of the tangent bundle \(TM = \langle \xi \rangle \oplus \ker(\eta)\), where the Reeb vector field \(\xi\) is defined by \(\eta(\xi) = 1\) and \(\iota_\xi \omega = 0\). The distribution \(\ker(\eta)\) is symplectic with respect to the restricted form \(\omega|_{\ker(\eta)}\). For a more in-depth treatment, we refer the reader to \cite{Libermann}, \cite{Tchuiaga2023}, and \cite{T-al}.
	
	\begin{example}[The Standard Product Manifold]
		The simplest and most fundamental example is the product manifold \(M = S^1 \times P\), where \((P, \sigma)\) is any \(2n\)-dimensional symplectic manifold with symplectic form $\sigma$. Let \(\theta\) be the coordinate on \(S^1\). We define the cosymplectic structure by:
		\(\eta = d\theta\), the pullback of the standard 1-form on \(S^1\) and 
		\(\omega = \pi_P^* \sigma\), the pullback of the symplectic form from \(P\). 
		Here, \(\pi_P: M \to P\) is the projection.  The Reeb vector field is \(\xi = \frac{\partial}{\partial \theta}\), and its flow is simply rotation around the \(S^1\) factor, which is clearly periodic with period \(2\pi\) (or 1, depending on normalization). The symplectic leaves of \(\ker(\eta)\) are the slices \(\{\theta_0\} \times P\).
	\end{example}
	
	\begin{example}[Symplectic Mapping Torus]
		A more sophisticated class of examples is given by symplectic mapping tori. Let \((P, \sigma)\) be a symplectic manifold and let \(\phi: P \to P\) be a symplectomorphism (\(\phi^* \sigma = \sigma\)). The mapping torus is the manifold
		$M_\phi = (P \times [0,1]) / \sim, \quad \text{where} \quad (p, 1) \sim (\phi(p), 0).$ 
		\(M_\phi\) is a fiber bundle over \(S^1\) with fiber \(P\). Let \(t\) be the coordinate on the \(S^1\) base. We can define a global 1-form \(\eta = dt\) and a 2-form \(\omega\) that restricts to \(\sigma\) on each fiber. This pair \((\eta, \omega)\) defines a cosymplectic structure on \(M_\phi\). The Reeb vector field corresponds to moving along the \(S^1\) direction, and its flow is periodic. As we will see later, all closed cosymplectic manifolds have periodic Reeb flow, hence have this structure.
	\end{example}
	
	\begin{definition}
		A group \( G \) satisfies the fragmentation property with respect to a family of subgroups \( \mathcal{F} = \{G_i\}_{i \in I} \) if every \( g \in G \) can be expressed as a finite product \( g = g_1 g_2 \cdots g_n \), where each \( g_j \in G_{i_j} \) for some \( i_j \in I \).
	\end{definition}
	
	\begin{definition}
		\( G \) acts transitively on a class of substructures if for any two substructures \( A, B \) of the same type, there exists \( g \in G \) with \( g(A) = B \).
	\end{definition}
	
	\begin{definition}
		A group \(K\) is simple if it is nontrivial and every normal subgroup 
		\( N \trianglelefteq K \) satisfies \( N = K \) or \( N = \{e\} \).
	\end{definition}
	
	\subsection{The co-flux geometry revisited}\label{SC0-1}
	In this subsection, we revisit the construction of the co-flux homomorphism, as it forms a main ingredient of this study. The concepts and notations used here are drawn from the findings presented in \cite{T-al}. This foundational discussion is critical for the subsequent developments explored in this paper.
	
	\begin{proposition}\label{Trans}\cite{T-al}
		Let \((M, \eta, \omega)\) be a compact cosymplectic manifold. Suppose that $
		\Phi_F = \{\phi_t\}_{t\in [0,1]}$ is a weakly Hamiltonian (resp. cosymplectic) isotopy on \(M\) satisfying $
		\widetilde{I}_{\eta,\omega}(\dot\phi_t) = dF_t, \quad \forall\, t\in[0,1].$	Then the isotopy 
		\[
		\tilde\Phi = \{\tilde\phi_t\}_{t\in[0,1]} \quad\text{defined by}\quad
		\tilde\phi_t: M \times \mathbb{S}^1 \longrightarrow M \times \mathbb{S}^1, \quad (x,\theta) \mapsto \Bigl(\phi_t(x),\, \mathcal{R}_{\Lambda_t(\Phi_F)}(x,\theta)\Bigr),
		\]
		with $
		\mathcal{R}_{\Lambda_t(\Phi_F)}(x,\theta) := \theta - \int_0^t \eta\bigl(\dot\phi_s\bigr)\bigl(\phi_s(x)\bigr) ds \quad (\mathrm{mod}\, 2\pi),$ is a Hamiltonian (resp. symplectic) isotopy of the symplectic manifold 
		$\widetilde{M} := M \times \mathbb{S}^1$ equipped with the symplectic form $
		\Omega = p^\ast(\omega) + p^\ast(\eta)\wedge \pi_2^\ast(du),$ where \(u\) is the angular coordinate on \(\mathbb{S}^1\), and $
		p: \widetilde{M}\to M,\quad\pi_2: \widetilde{M}\to \mathbb{S}^1$ are the projection maps. Its corresponding Hamiltonian is $
		\tilde F_t = F_t\circ p + p^\ast\bigl(\iota(\dot\phi_t)\eta\bigr)\pi_2.$ 
		Conversely, if the path $
		\tilde\phi_t: M \times \mathbb{S}^1 \longrightarrow M \times \mathbb{S}^1, \quad (x,\theta) \mapsto \Bigl(\phi_t(x),\, \mathcal{R}_{\Lambda_t(\Phi_F)}(x,\theta)\Bigr)$, is a Hamiltonian (resp. symplectic) isotopy of \((\widetilde{M}, \Omega)\) with Hamiltonian \(\tilde F\), then the path \(t\mapsto \phi_t\) is a weakly Hamiltonian (resp. cosymplectic) isotopy of \((M,\eta,\omega)\) with weak Hamiltonian given by \(\tilde H_t\circ S_l\), where 
		$
		S_l: M \longrightarrow M \times \mathbb{S}^1,\quad x\mapsto (x,l)
		$	is any section of the projection \(\pi_2\).
	\end{proposition}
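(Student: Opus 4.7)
The strategy is a direct Cartan-calculus verification: compute the generator of the lifted isotopy $\tilde\phi_t$, expand $\iota_{\tilde X_t}\Omega$ using the product structure of $\Omega$, match the result to the differential of the candidate Hamiltonian $\tilde F_t$, and obtain the converse by pulling back the Hamilton equation along a constant-angle section. No deep machinery is required beyond closedness of $\eta$ and $\omega$, together with the fact, built into the notions of cosymplectic and weakly Hamiltonian vector fields, that $c_t := \eta(\dot\phi_t)$ is a time-dependent constant on $M$ (because $L_{\dot\phi_t}\eta = d(\eta(\dot\phi_t)) = 0$).

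First I would record that $\Omega = p^*\omega + p^*\eta\wedge \pi_2^*du$ is symplectic on $\widetilde M$: closedness is immediate, and the top power $\Omega^{n+1} = (n+1)\,p^*(\omega^n\wedge\eta)\wedge \pi_2^*du$ is a volume form because $\eta\wedge\omega^n$ is one on $M$. Next, differentiating the defining formula of $\tilde\phi_t$ at time $t$ and evaluating at the image point $\tilde\phi_t(x,\theta)=(y,\theta')$ gives the infinitesimal generator
\[
\tilde X_t(y,\theta') \;=\; \dot\phi_t(y)\;-\;\eta(\dot\phi_t)(y)\,\partial_u .
\]
Applying $\iota_{\tilde X_t}$ to each summand of $\Omega$ and using $\iota_Z(\alpha\wedge\beta)=(\iota_Z\alpha)\beta-\alpha(\iota_Z\beta)$ for 1-forms yields
\[
\iota_{\tilde X_t}\Omega \;=\; p^*\bigl(\iota_{\dot\phi_t}\omega + \eta(\dot\phi_t)\,\eta\bigr) \;+\; p^*(\eta(\dot\phi_t))\,\pi_2^*du .
\]

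Under the weakly Hamiltonian hypothesis $\widetilde{I}_{\eta,\omega}(\dot\phi_t) = dF_t$, so that the parenthesized expression is $dF_t$, together with the constancy of $c_t$, the right-hand side becomes $d(F_t\circ p + c_t\,u) = d\tilde F_t$, identifying $\tilde F_t$ as the Hamiltonian of $\tilde\phi_t$ on $(\widetilde M,\Omega)$. For the cosymplectic (non-exact) variant the same computation shows $\iota_{\tilde X_t}\Omega$ is a sum of two closed 1-forms, hence closed, so $\tilde\phi_t$ is symplectic. For the converse, assuming the stated lift satisfies $\iota_{\tilde X_t}\Omega = d\tilde F$ (resp.\ closed), I pull back by the section $S_l:x\mapsto(x,l)$; since $S_l^*(\pi_2^*du)=0$ and $p\circ S_l=\mathrm{id}_M$, the identity restricts on $M$ to $\iota_{\dot\phi_t}\omega + \eta(\dot\phi_t)\,\eta = d(\tilde F\circ S_l)$, which is precisely the weakly Hamiltonian (resp.\ cosymplectic) condition on $\phi_t$ with weak Hamiltonian $\tilde F\circ S_l$.

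The one subtlety requiring care is that the angular coordinate $u$ on $\mathbb{S}^1$ is not a globally single-valued function, so the expression $\tilde F_t = F_t\circ p + c_t\,u$ is a priori only locally defined. This is the main thing to get right in the writeup, but it is benign precisely because $c_t$ is constant on $M$: the multi-valuedness is an additive constant that does not affect $d\tilde F_t$ or the Hamilton equation, and working in a local trivialization of $\pi_2$ suffices to make the identification rigorous.
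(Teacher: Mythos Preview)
The paper does not supply its own proof of this proposition; it is quoted from \cite{T-al} and used as a black box. So there is no ``paper's approach'' to compare against, and your direct Cartan-calculus verification is exactly the computation one would expect the cited source to contain. Your identification of the generator $\tilde X_t=\dot\phi_t-\eta(\dot\phi_t)\,\partial_u$ and the expansion
\[
\iota_{\tilde X_t}\Omega \;=\; p^*\bigl(\iota_{\dot\phi_t}\omega+\eta(\dot\phi_t)\,\eta\bigr)+p^*(\eta(\dot\phi_t))\,\pi_2^*du
\]
are correct, as is the converse via pullback along $S_l$.

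The only point that deserves a sharper treatment than you give it is the multi-valuedness of $u$. You call it ``benign,'' but it is benign only for $d\tilde F_t$, not for $\tilde F_t$ itself: when $c_t=\eta(\dot\phi_t)\neq 0$, the form $c_t\,\pi_2^*du$ is closed but \emph{not} exact on $\widetilde M$, so the lifted isotopy is genuinely only symplectic, and the displayed ``Hamiltonian'' $\tilde F_t=F_t\circ p+c_t\,u$ is a multi-valued primitive rather than an honest function. This is an imprecision already present in the statement of the proposition as quoted (the symbol $\pi_2$ is being used as if it were $\mathbb R$-valued), so your proof is as faithful to the statement as one can be; but in a clean writeup you should say explicitly that the lift is Hamiltonian in the strict sense precisely when $c_t\equiv 0$, and otherwise is symplectic with the indicated local primitive.
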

	
	We now deduce a topological consequence.
	
	\begin{lemma}\label{co-surj-1}\cite{Tchuiaga2023}
		Let \((M, \eta, \omega)\) be a connected compact cosymplectic manifold equipped with a Riemannian metric \(g\). Then there exists a surjective homomorphism $
		\mathfrak{co}\text{-ev} : \pi_1\bigl(G_{\eta,\omega}(M)\bigr) \twoheadrightarrow \pi_1(M).
		$
	\end{lemma}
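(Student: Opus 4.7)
The plan is to realize $\mathfrak{co}\text{-ev}$ as the map induced on $\pi_1$ by evaluation at a fixed base point $x_0 \in M$, and then prove surjectivity by explicitly lifting loops in $M$ to cosymplectic isotopies via fragmentation in cosymplectic Darboux charts. First, fix $x_0 \in M$ and consider the evaluation map $\mathrm{ev}_{x_0} : G_{\eta,\omega}(M) \to M$, $\phi \mapsto \phi(x_0)$. The transitivity of the action (a consequence of Theorem \ref{Theo-tran}, applied pointwise rather than chartwise) together with the Darboux normal form for cosymplectic manifolds shows that $\mathrm{ev}_{x_0}$ is a locally trivial smooth fibration. The candidate $\mathfrak{co}\text{-ev}$ is then the induced homomorphism $\pi_1(G_{\eta,\omega}(M), \mathrm{id}) \to \pi_1(M, x_0)$, given explicitly by $[\{\phi_t\}] \mapsto [t \mapsto \phi_t(x_0)]$.

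For surjectivity, I would pick a smooth loop $\gamma : [0,1] \to M$ representing an arbitrary class in $\pi_1(M, x_0)$ and build a cosymplectic isotopy realizing it. Cover $\gamma([0,1])$ by finitely many cosymplectic Darboux charts $U_1, \ldots, U_N$ (in which $\eta = dt$ and $\omega$ restricts to a standard symplectic form on each slice), choose a partition $0 = t_0 < t_1 < \cdots < t_N = 1$ with $\gamma([t_{i-1}, t_i]) \subset U_i$, and on each $U_i$ decompose $\dot\gamma(s) = f(s)\xi + Y(s)$ with $Y(s) \in \ker\eta$. I would then produce a time-dependent cosymplectic vector field $Z^i_s = f(s)\xi + X_{H^i_s}$ whose value at $\gamma(s)$ equals $\dot\gamma(s)$, with $H^i_s$ a local Hamiltonian on the symplectic slice satisfying $dH^i_s = \iota_{Y(s)} \omega$ transversally to $\gamma$. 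The Riemannian metric $g$ enters here to build smooth bump cut-offs and tubular neighborhoods, so that each $Z^i$ extends to a globally defined, compactly supported cosymplectic vector field on $M$. Composing the resulting flows $\{\psi^i_s\}$ via reparametrization yields an isotopy $\{\phi_t\}_{t \in [0,1]} \subset G_{\eta,\omega}(M)$ with $\phi_0 = \mathrm{id}$ and $\phi_t(x_0) = \gamma(t)$.

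Finally, the isotopy must be closed into a genuine loop: since $\gamma(1) = x_0$, the endpoint $\phi_1$ fixes $x_0$ but need not be the identity. The plan is to invoke the homotopy long exact sequence of the fibration $\mathrm{ev}_{x_0}$, which reduces surjectivity of $\mathfrak{co}\text{-ev}$ to triviality of the connecting homomorphism $\pi_1(M, x_0) \to \pi_0\bigl(G_{\eta,\omega}(M) \cap \mathrm{Stab}(x_0)\bigr)$. Killing this connecting map amounts to showing that $\phi_1$ lies in the identity component of the cosymplectic stabilizer of $x_0$; I would establish this by using a Darboux chart at $x_0$, a Moser-type trick adapted to the cosymplectic structure, and the periodicity of the Reeb flow (an intrinsic $S^1$-action on $M$) to produce a further cosymplectic isotopy from $\phi_1$ to $\mathrm{id}$ entirely through diffeomorphisms fixing $x_0$. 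This last step is the main obstacle: while each local piece $\psi^i_s$ is manifestly in the identity component of $G_{\eta,\omega}(M)$, controlling the composition $\phi_1$ inside the stabilizer of $x_0$ is the technical heart of the argument, and it is precisely where the Riemannian metric (for cut-offs and normal forms) and the periodicity of the Reeb flow (providing the intrinsic $S^1$-symmetry needed to straighten out the obstruction) both enter essentially.
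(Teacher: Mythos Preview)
Your approach differs from the paper's. The paper does not work directly on $M$: it passes to the symplectization $\widetilde{M} = M \times S^1$ with its symplectic form $\Omega$, uses the evaluation fibration $G_\Omega(\widetilde{M}) \to \widetilde{M}$ and the long exact homotopy sequence there (together with the claimed connectedness of the symplectic point-stabilizer) to obtain surjectivity of $Ev:\pi_1(G_\Omega(\widetilde{M})) \to \pi_1(\widetilde{M})$, and then pulls back via the lifting $\mathcal{L}_\bullet$ of Proposition~\ref{Trans} and the projection $\pi_1(\widetilde{M}) \cong \pi_1(M) \oplus \pi_1(S^1) \to \pi_1(M)$. What this buys is that the hard step --- connectedness of the point-stabilizer --- is offloaded to the symplectic category, where it is at least folklore. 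Your route stays on $M$ throughout, which is conceptually cleaner and avoids the auxiliary product manifold.

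However, your proposal has a genuine gap exactly where you flag it. You correctly reduce surjectivity to showing that your $\phi_1$ lies in the identity component of the cosymplectic stabilizer of $x_0$, but the sketch ``Moser trick plus Reeb periodicity'' does not close this. A Moser-type argument in a Darboux chart will straighten $\phi_1$ to the identity \emph{near} $x_0$ through cosymplectomorphisms fixing $x_0$, but you are then left with a cosymplectomorphism equal to the identity near $x_0$ and arbitrary away from it, and nothing in your outline connects that map to $\mathrm{id}$ inside the stabilizer; in particular it is not clear what work Reeb periodicity is doing at this point. This is precisely the step the paper sidesteps by transporting the problem to $(\widetilde{M},\Omega)$ and citing ``standard transversality arguments'' for $\pi_0(G_{\Omega,x_0}(\widetilde{M}))=0$. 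To make your direct argument complete you would need either an independent proof that the cosymplectic point-stabilizer is path-connected, or a more careful construction of your particular $\phi_1$ (for instance, arranging each local piece to be weakly Hamiltonian with support disjoint from $x_0$ after the first step, so that the composite is visibly contractible rel $x_0$).
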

	
	\begin{proof}
		Consider the compact symplectic manifold $
		\widetilde{M} := M \times \mathbb{S}^1,
		$ equipped with the symplectic form $
		\Omega = p^\ast(\omega) + p^\ast(\eta)\wedge \pi_2^\ast(du),
		$ where \(u\) is a coordinate on \(\mathbb{S}^1\) and 
		$	p: \widetilde{M}\to M,\quad\pi_2: \widetilde{M}\to \mathbb{S}^1,
		$ are the projection maps (see Proposition~\ref{Trans}). Let $
		Iso_\Omega(\widetilde{M})
		$ 
		denote the group of symplectic isotopies of \((\widetilde{M},\Omega)\), and set
		$
		G_\Omega(\widetilde{M}) := ev_1\Bigl(Iso_\Omega(\widetilde{M})\Bigr),
		$ where for any isotopy \(\Phi = \{\phi^t\}_{t\in[0,1]}\) we define \(ev_1(\Phi) := \phi^1\).	There is a surjective homomorphism (often referred to as the evaluation or flux homomorphism) $
		Ev : \pi_1\bigl(G_\Omega(\widetilde{M})\bigr) \to \pi_1\bigl(\widetilde{M}\bigr),$	where \(Ev\) sends a loop in \(G_\Omega(\widetilde{M})\) to the induced loop in \(\widetilde{M}\). We now include a proof of this fact.
		\bigskip
		
		Proof of the Surjectivity of \(Ev\). 
		Choose a base point \(x_0\) in \(\widetilde{M}\) and consider the evaluation map $
		ev_{x_0} : G_\Omega(\widetilde{M}) \to \widetilde{M}, \quad ev_{x_0}(\phi) = \phi(x_0).
		$ Under standard conditions in symplectic topology (which hold for many compact symplectic manifolds), \(ev_{x_0}\) is a (locally trivial) fibration. Its fiber $
		G_{\Omega,x_0}(\widetilde{M}) := \{ \phi\in G_\Omega(\widetilde{M}) : \phi(x_0) = x_0 \}
		$	is the subgroup of symplectomorphisms fixing \(x_0\). The fibration $
		ev_{x_0} : G_\Omega(\widetilde{M}) \rightarrow \widetilde{M}
		$ then yields the long exact sequence in homotopy:
		\[
		\cdots \longrightarrow \pi_1\bigl(G_{\Omega,x_0}(\widetilde{M})\bigr) \longrightarrow \pi_1\bigl(G_\Omega(\widetilde{M})\bigr)
		\stackrel{(ev_{x_0})_*}{\longrightarrow} \pi_1\bigl(\widetilde{M}\bigr)
		\longrightarrow \pi_0\bigl(G_{\Omega,x_0}(\widetilde{M})\bigr) \longrightarrow \cdots.
		\] 
		Under standard transversality arguments, the fiber \(G_{\Omega,x_0}(\widetilde{M})\) is connected, hence $
		\pi_0\bigl(G_{\Omega,x_0}(\widetilde{M})\bigr) = 0.
		$ Therefore, the map $
		(ev_{x_0})_* : \pi_1\bigl(G_\Omega(\widetilde{M})\bigr) \longrightarrow \pi_1\bigl(\widetilde{M}\bigr)
		$ is surjective, which is exactly the assertion that $
		Ev : \pi_1\bigl(G_\Omega(\widetilde{M})\bigr) \to \pi_1\bigl(\widetilde{M}\bigr)
		$ is surjective. 
		Returning to the context of Lemma~\ref{co-surj-1}, by Proposition~\ref{Trans} every loop $
		\Phi = \{\phi_t\}_{t\in[0,1]}$	in \(G_{\eta,\omega}(M)\) (based at the identity) can be lifted to a symplectic isotopy $
		\tilde\Phi = \{\tilde\phi_t\}_{t\in[0,1]} \quad\text{on } \widetilde{M}
		$ via $
		\tilde\phi_t(x,\theta) = \Bigl(\phi_t(x),\,\theta\Bigr)$. This defines a map $
		\mathcal{L}_\bullet : \pi_1\bigl(G_{\eta,\omega}(M)\bigr) \longrightarrow \pi_1\bigl(G_\Omega(\widetilde{M})\bigr).
		$	Now, composing with the surjective \(Ev\) and the natural projection from 
		\(\pi_1\bigl(\widetilde{M}\bigr) \simeq \pi_1(M)\oplus \pi_1(\mathbb{S}^1)\) onto \(\pi_1(M)\), we obtain the surjective homomorphism $
		\mathfrak{co}\text{-ev} : \pi_1\bigl(G_{\eta,\omega}(M)\bigr) \longrightarrow \pi_1(M).$
	\end{proof}
	
	\begin{lemma}\label{local-cont}\cite{Tchuiaga2023}
		Let \((M, \eta, \omega)\) be a compact connected cosymplectic manifold. Then, the group $
		G_{\eta, \omega}(M)$ is locally contractible.
	\end{lemma}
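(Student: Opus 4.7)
The plan is to reduce local contractibility of $G_{\eta,\omega}(M)$ to the well-known local contractibility of the symplectomorphism group of the symplectization $\widetilde{M} = M\times S^1$, exploiting the natural lift furnished by Proposition \ref{Trans}. The assignment $\phi\mapsto\tilde\phi$, where $\tilde\phi(x,\theta)=(\phi(x),\theta)$, defines a continuous group monomorphism $\iota: G_{\eta,\omega}(M)\hookrightarrow G_\Omega(\widetilde{M})$; a direct check shows that $\tilde\phi^*\Omega=\Omega$ whenever $\phi^*\eta=\eta$ and $\phi^*\omega=\omega$. The image of $\iota$ consists precisely of those symplectomorphisms commuting with the translation $S^1$-action on the second factor, and $\iota$ is a homeomorphism onto its image in the $C^\infty$-topology, so any neighborhood of $\id$ in $G_\Omega(\widetilde{M})$ pulls back to a neighborhood of $\id$ in $G_{\eta,\omega}(M)$.

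Next, I would invoke an $S^1$-equivariant version of the Weinstein Lagrangian neighborhood theorem applied to the diagonal in $(\widetilde{M}\times\widetilde{M},\,\Omega\oplus(-\Omega))$: averaging any Riemannian metric over the diagonal $S^1$-action yields an invariant metric, and the classical construction (tubular neighborhood of the diagonal, identification of its normal bundle with $T^*\widetilde{M}$ via the symplectic form) then respects the $S^1$-action. As a consequence, there exist a $C^1$-neighborhood $\mathcal V$ of $\id$ in $G_\Omega(\widetilde{M})$ and a continuous map $\tilde\psi\mapsto\alpha_{\tilde\psi}$ into the space of closed $1$-forms on $\widetilde{M}$, sending $\id$ to $0$, with $\alpha_{\tilde\psi}$ being $S^1$-invariant precisely when $\tilde\psi$ commutes with the $S^1$-action. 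The linear homotopy $s\mapsto s\alpha_{\tilde\psi}$ for $s\in[0,1]$ then produces, via the inverse Weinstein correspondence, a continuous isotopy $\{\tilde\psi_s\}_{s\in[0,1]}\subset G_\Omega(\widetilde{M})$ from $\id$ to $\tilde\psi$; this isotopy is $S^1$-equivariant whenever $\tilde\psi$ is.

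Specializing to $\tilde\psi=\iota(\phi)$ with $\phi\in\iota^{-1}(\mathcal V)$, each $\tilde\psi_s$ commutes with the $S^1$-action and therefore descends to $\phi_s\in G_{\eta,\omega}(M)$ with $\phi_0=\id$ and $\phi_1=\phi$. The jointly continuous assignment $(\phi,s)\mapsto\phi_s$ provides the required contraction of the neighborhood $\iota^{-1}(\mathcal V)$ of $\id$; local contractibility at an arbitrary $\phi_0\in G_{\eta,\omega}(M)$ then follows by left-translation, which is a self-homeomorphism carrying $\id$ to $\phi_0$. The principal technical obstacle lies in the $S^1$-equivariance of the Weinstein construction: one must verify that the invariant metric, the normal exponential map, and the musical identification of the normal bundle of the diagonal with $T^*\widetilde{M}$ can all be chosen equivariantly, so that the resulting correspondence $\tilde\psi\leftrightarrow\alpha_{\tilde\psi}$ intertwines the $S^1$-action. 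This is essentially routine equivariant bookkeeping, but it is the crucial ingredient that allows descent from $\widetilde{M}$ back to $M$.
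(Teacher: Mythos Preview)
Your argument is correct and shares the paper's overarching strategy: embed $G_{\eta,\omega}(M)$ into $G_\Omega(\widetilde{M})$ via $\phi\mapsto(\phi,\id_{S^1})$ and invoke Weinstein's local contractibility of the symplectomorphism group. The implementations differ, however. The paper builds an explicit homeomorphism $\Lambda$ between $G_{\eta,\omega}(M)$ and the image $\tilde{ev}_1\bigl(\mathcal{L}_\bullet(Iso_{\eta,\omega}(M))\bigr)\subset G_\Omega(\widetilde{M})$ by threading together the evaluation maps $ev_1$, $\tilde{ev}_1$ and continuous sections thereof, and then asserts that this image inherits local contractibility from the ambient $G_\Omega(\widetilde{M})$. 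Your route is more direct and arguably more transparent: you re-run the Weinstein Lagrangian-neighborhood construction itself in an $S^1$-equivariant fashion, so that the contracting homotopy visibly remains inside the subgroup of $S^1$-equivariant symplectomorphisms and therefore descends to $M$. This makes explicit the step the paper leaves implicit---namely, why the Weinstein contraction of a neighborhood of $\id$ in $G_\Omega(\widetilde{M})$ can be chosen to preserve the image of $\iota$---at the cost of the equivariant bookkeeping you flag. The paper also records, in Remark~\ref{rk:alt-proof-local-cont}, a third approach via the semidirect product decomposition $G_{\eta,\omega}(M)\cong C^\infty(B,S^1)\rtimes G_\Omega(B)$ over the symplectic base $B=M/S^1$, which sidesteps $\widetilde{M}$ altogether.
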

	
	\begin{proof}
		Assume that \((\widetilde{M}, \Omega)\) is defined as in Proposition~\ref{Trans} (i.e. \(\widetilde{M} = M \times \mathbb{S}^1\) with a corresponding symplectic form \(\Omega\)). Consider the map $
		\mathcal{L}_\bullet : Iso_{\eta, \omega}(M) \longrightarrow Iso_{\Omega}(\widetilde{M}), \quad \Phi \mapsto \tilde{\Phi},$ 
		where \(Iso_{\eta, \omega}(M)\) is the space of cosymplectic isotopies of \(M\) and \(Iso_{\Omega}(\widetilde{M})\) is the space of symplectic isotopies of \((\widetilde{M},\Omega)\).  Let \(d_{C^\infty}^M\) (resp. \(d_{C^\infty}^{\widetilde{M}}\)) denote the metric induced by the \(C^\infty\)-compact-open topology on \(M\) (resp. \(\widetilde{M}\)). In our present context, one easily verifies that $
		d_{C^\infty}^M = d_{C^\infty}^{\widetilde{M}} \circ \mathcal{L}_\bullet,
		$ so that \(\mathcal{L}_\bullet\) is continuous with respect to these topologies. Consequently, we obtain a homeomorphism
		\begin{equation}\label{B-homeo}
			\mathcal{B}_\bullet : Iso_{\eta, \omega}(M) \longrightarrow \mathcal{L}_\bullet\Bigl( Iso_{\eta, \omega}(M) \Bigr), \quad \Phi \mapsto \mathcal{L}_\bullet(\Phi),
		\end{equation}
		with both spaces equipped with the \(C^\infty\)-compact-open topology. Next, consider the time-one evaluation maps $
		ev_1 : Iso_{\eta, \omega}(M) \longrightarrow G_{\eta, \omega}(M), \quad \Phi = \{\phi^t\} \mapsto \phi^1,$ and $
		\tilde{ev}_1 : Iso_{\Omega}(\widetilde{M}) \longrightarrow G_\Omega(\widetilde{M}), \quad \tilde{\Phi} = \{\tilde \phi^t\} \mapsto \tilde{\phi}^1.$ 
		Assume that a continuous section $
		S_1 : G_{\eta, \omega}(M) \longrightarrow Iso_{\eta, \omega}(M)
		$	of \(ev_1\) is fixed. Define also a section $
		S_2 : \tilde{ev}_1\Bigl( \mathcal{L}_\bullet\bigl( Iso_{\eta, \omega}(M) \bigr) \Bigr) \longrightarrow \mathcal{L}_\bullet\bigl( Iso_{\eta, \omega}(M) \bigr)
		$ by setting $
		S_2 := \mathcal{B}_\bullet \circ S_1.$ That is, for every \((\phi^1, \mathrm{id}_{\mathbb{S}^1}) \in \tilde{ev}_1\Bigl( \mathcal{L}_\bullet\bigl( Iso_{\eta, \omega}(M) \bigr) \Bigr)\), $
		S_2\bigl((\phi^1, \mathrm{id}_{\mathbb{S}^1})\bigr) = \mathcal{B}_\bullet\Bigl(S_1(\phi^1)\Bigr).
		$ 
		Then, it is clear that $
		\mathcal{B}_\bullet^{-1} \circ S_2 = S_1.$ 
		Now, consider the commutative diagram 
		\begin{center}
			\begin{tikzcd}[row sep=large, column sep=large]
				Iso_{\eta, \omega}(M) \ar[r, "\mathcal{B}_\bullet"] \ar[d, "ev_1"'] 
				& \mathcal{L}_\bullet\left( Iso_{\Omega}(\widetilde{M})\right) \ar[d, "\widetilde{ev}_1"] \\
				G_{\eta,\omega}(M) \ar[r, "\varLambda"'] 
				& \widetilde{ev}_1\left( \mathcal{L}_\bullet\left( Iso_{\Omega}(\widetilde{M})\right)\right),
			\end{tikzcd}
		\end{center}
		where the continuous map $
		\varLambda := \tilde{ev}_1 \circ \mathcal{B}_\bullet \circ S_1
		$ is defined on \(G_{\eta, \omega}(M)\), and its inverse is given by $
		\varLambda^{-1} := ev_1 \circ \mathcal{B}_\bullet^{-1} \circ S_2.
		$ To check this, observe that
		\begin{align*}
			ev_1 \circ \mathcal{B}_\bullet^{-1} \circ S_2 \circ \varLambda
			&= ev_1 \circ \mathcal{B}_\bullet^{-1} \circ S_2 \circ \tilde{ev}_1 \circ \mathcal{B}_\bullet \circ S_1 \\
			&= ev_1 \circ \mathcal{B}_\bullet^{-1} \circ S_2 \circ \tilde{ev}_1 \circ S_2 \quad (\text{by } S_2 = \mathcal{B}_\bullet \circ S_1)\\
			&= ev_1 \circ \Bigl(\mathcal{B}_\bullet^{-1} \circ S_2\Bigr) \quad (\text{since } \tilde{ev}_1 \circ S_2 \text{ is the identity})\\
			&= ev_1 \circ S_1 \\
			&= \mathrm{id}_{G_{\eta,\omega}(M)}.
		\end{align*}
		A similar computation shows that $
		\varLambda \circ \varLambda^{-1} = \mathrm{id}_{\tilde{ev}_1\Bigl( \mathcal{L}_\bullet\bigl(Iso_{\Omega}(\widetilde{M})\bigr)\Bigr)}.
		$ Hence, \(\varLambda\) is a homeomorphism. Since the spaces \(G_{\eta,\omega}(M)\) and \(\tilde{ev}_1\Bigl( \mathcal{L}_\bullet\bigl( Iso_{\Omega}(\widetilde{M})\bigr)\Bigr)\) are homeomorphic, and since Weinstein \cite{AWein} proved that $
		G_\Omega(\widetilde{M}) := ev_1\bigl( Iso_{\Omega}(\widetilde{M}) \bigr)$ is locally contractible, it follows that \(\tilde{ev}_1\Bigl( \mathcal{L}_\bullet\bigl( Iso_{\Omega}(\widetilde{M})\bigr)\Bigr)\) is locally contractible (with the subspace topology inherited from \(G_{\eta,\omega}(M)\)). Consequently, \(G_{\eta,\omega}(M)\) is locally contractible.
	\end{proof}
	
	Later, exploiting the $S^1$–bundle structure induced by the periodic Reeb flow, we will give a short alternative proof of Lemma~\ref{local-cont} (See Remark~\ref{rk:alt-proof-local-cont}).
	
	\subsection{The co-flux homomorphism}\label{SC0-2}
	In general we do not know whether for every $\alpha\in \mathcal{Z}^1(M)$, the vector 
	field $X := \widetilde I_{\eta, \omega}^{-1}(\alpha)$ is a cosymplectic vector field or not (where $\widetilde I_{\eta, \omega} := \omega + \eta\otimes\eta $, \cite{T-al}). This seems to render the study of cosymplectic dynamics delicate. In order to go around this difficulty, we shall work with a particular suitable subgroup of the first de Rham group $H^1(M, \mathbb R)$ defined as follows: The following  set    
	$$\mathcal{C}^1_{Reeb}(M) : = \{ \alpha\in \mathcal Z^{1}(M): \alpha(\xi)= \text{Cte} \},$$  is non-empty, since from $\eta(\xi) = 1$, we derive that $\eta \in \mathcal{C}^1_{Reeb}(M)$. Also, 
	for any vector field $X$ on $M$ such that $d( \imath_X \omega) = 0 $, we have 
	$( \imath_X \omega)(\xi) = 0,$
	i.e., $ \imath_X \omega \in \mathcal{C}^1_{Reeb}(M) $. 
	We will need the following quotient space: 
	\begin{equation}
		H^1_{Reeb}(M, \mathbb R) : = \mathcal{C}^1_{Reeb}(M)/ \mathrm{Im}(d : \mathcal{C}^0_{Reeb}(M)\longrightarrow \mathcal{C}^1_{Reeb}(M)),
	\end{equation}
	where 
	\begin{equation}
		\mathcal{C}^0_{Reeb}(M) : = \{ f\in C^{\infty}(M): \xi(f)= \text{Cte} \}.
	\end{equation}    	
	From the above study we have the following well-defined surjective group homomorphism,
	\begin{equation}
		\widetilde S_{\eta,\omega}: Iso_{\eta, \omega}(M) \longrightarrow  H_{Reeb}^{1}(M,\mathbb{R}),
		(\varphi_{t})\mapsto \left[\int_{0}^{1}\varphi_{t}^{*}(\widetilde{I}_{\eta, \omega}(\dot{\varphi_{t}})) dt\right],
	\end{equation}
	where $Iso_{\eta, \omega}(M) $ is the space of all cosymplectic isotopies of \((M, \eta, \omega)\) \cite{Tchuiaga2023}. 
	\subsubsection*{The map \(S_{\eta, \omega}\)}\label{SC0-3}
	Let \(\sim\) be the equivalence relation on \(Iso_{\eta,\omega}(M)\) defined by: $
	\Phi \sim \Psi $ if and only if $\Phi $ and  $\Psi$ are homotopic relative to fixed endpoints.
	We denote the homotopy class of a cosymplectic isotopy \(\Phi\) by \([\Phi]\) and set $
	\widetilde{Iso_{\eta, \omega}(M)} := Iso_{\eta,\omega}(M)/\sim.
	$ By Lemma~\ref{local-cont}, the quotient \(\widetilde{Iso_{\eta,\omega}(M)}\) identifies with the universal cover of  $
	G_{\eta,\omega}(M),$ which we denote by \(\widetilde{G_{\eta,\omega}(M)}\). Let \(\pi_1\bigl(G_{\eta,\omega}(M)\bigr)\) be the first fundamental group of \(G_{\eta,\omega}(M)\). Define the \emph{co-flux group} as: $
	\Gamma_{\eta,\omega} := \widetilde{S}_{\eta,\omega}\Bigl(\pi_1\bigl(G_{\eta,\omega}(M)\bigr)\Bigr).$  The epimorphism 
	\(\widetilde{S}_{\eta,\omega}: \widetilde{G_{\eta,\omega}(M)} \to \mathbb{H}_{Reeb}^{1}(M,\mathbb{R})\)
	induces a surjective map
	\begin{equation}
		S_{\eta,\omega}: \widetilde{Iso_{\eta, \omega}(M)} \to\mathbb{H}_{Reeb}^{1}(M,\mathbb{R})/\Gamma_{\eta,\omega},
	\end{equation}
	so that the following diagram commutes:
	\begin{center}
		\begin{tikzcd}[row sep=large, column sep=large]
			\widetilde{G_{\eta,\omega}(M)} \ar[r, "\widetilde{S}_{\eta,\omega}"] \ar[d, "\pi"'] 
			& \mathbb{H}_{Reeb}^{1}(M,\mathbb{R}) \ar[d, "\pi'"] \\
			G_{\eta,\omega}(M) \ar[r, "S_{\eta,\omega}"'] 
			& \mathbb{H}_{Reeb}^{1}(M,\mathbb{R})/\Gamma_{\eta,\omega}
		\end{tikzcd}
	\end{center}
	where 
	\(\pi':\mathbb{H}_{Reeb}^{1}(M,\mathbb{R})\to \mathbb{H}_{Reeb}^{1}(M,\mathbb{R})/\Gamma_{\eta,\omega}\)
	is the quotient map, and
	\[
	\pi: \widetilde{G_{\eta,\omega}(M)}\to G_{\eta,\omega}(M),\quad [\phi_t]_{t\in[0,1]}\mapsto \phi_1,
	\]
	is the natural covering projection. In particular, we have $
	\pi'\circ \widetilde{S}_{\eta,\omega} = S_{\eta,\omega}\circ \pi.$
	
	\begin{proposition}\label{Dis-1}\cite{Tchuiaga2023}
		Let \((M, \eta, \omega)\) be a closed cosymplectic manifold, and let \(\Omega\) be the associated symplectic form on \(M\times \mathbb{S}^1\). Then:
		\begin{enumerate}
			\item \(P_{\omega}\subseteq P_{\Omega}\) and \(P_{\eta}\subseteq P_{\Omega}\).
			\item \(\Gamma_{\eta,\omega}\subseteq H^1\Bigl(M,\,P_{\omega}+P_{\eta}\cdot P_{\eta}\Bigr)\subseteq H^1\Bigl(M,\,P_{\Omega}+P_{\Omega}\cdot P_{\Omega}\Bigr)\).
		\end{enumerate}
		In particular, \(\Gamma_{\eta,\omega}\) is countable.
		
		\vspace{1mm}
		\noindent (Here, “\(+\)” denotes the Minkowski (or direct) sum, and “\(\cdot\)” denotes the product by a scalar or, more generally, the pairing based on integration.)
	\end{proposition}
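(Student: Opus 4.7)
The plan is to use the lift from Proposition~\ref{Trans} to transfer everything to the symplectic flux on $(\widetilde M,\Omega) = (M\times S^1,\Omega)$ and then invoke the classical identification of symplectic flux periods with integrals of $\Omega$ over surfaces swept by loops of diffeomorphisms.

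For Part~1, I would pick explicit geometric witnesses. A $2$-cycle $\sigma \subset M$ lifts to the horizontal slice $\sigma\times\{\ast\}\subset\widetilde M$, on which the summand $p^*\eta\wedge du$ vanishes, giving $\int_{\sigma\times\{\ast\}}\Omega = \int_\sigma\omega$. A $1$-cycle $\gamma\subset M$ produces the torus $\gamma\times S^1$: here $p^*\omega$ vanishes on any surface containing a vertical direction, and Fubini on $p^*\eta\wedge du$ yields $\int_\gamma\eta$ (up to the normalization of $du$).

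For the left inclusion in Part~2, the crucial input is that the cosymplectic condition $L_X\eta=0$ together with $d\eta=0$ forces $d(\eta(X))=0$, so on the connected $M$ the function $\eta(X)$ is a constant $c\in\mathbb R$. This enables the \emph{naive} lift $\tilde\phi_t(x,\theta)=(\phi_t(x),\theta)$ of a cosymplectic isotopy $\Phi=\{\phi_t\}$ to be a symplectic isotopy of $(\widetilde M,\Omega)$, as $\iota_{\tilde X_t}\Omega = p^*\iota_{X_t}\omega + c_t\,du$ is closed; moreover $\tilde\Phi$ is a loop whenever $\Phi$ is. Using $\phi_t^*\eta=\eta$, the co-flux then decomposes as
\[
\widetilde{S}_{\eta,\omega}(\Phi) = \Bigl[\int_0^1\phi_t^*\iota_{X_t}\omega\,dt\Bigr] + C\,[\eta], \qquad C := \int_0^1 c_t\,dt.
\]
Evaluated on a $1$-cycle $\gamma\subset M$, the first summand matches the symplectic flux of $\tilde\Phi$ along $\gamma\times\{\ast\}$; since $\tilde\Phi$ is a symplectic loop, this equals $\int_\Sigma\omega\in P_\omega$, where $\Sigma$ is the $2$-cycle swept by $\gamma$ in the slice. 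To identify $C$ as a period of $\eta$, I would evaluate the symplectic flux of $\tilde\Phi$ on the vertical cycle $\{x_0\}\times S^1$: the swept surface is $\gamma_0\times S^1$ with $\gamma_0(t)=\phi_t(x_0)$, and Fubini gives $C\cdot\int_{S^1}du = \bigl(\int_{\gamma_0}\eta\bigr)\bigl(\int_{S^1}du\bigr)$, so $C = \int_{\gamma_0}\eta\in P_\eta$. Hence the second summand contributes $C\int_\gamma\eta\in P_\eta\cdot P_\eta$, proving the first inclusion.

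The right inclusion is immediate from Part~1, and countability is routine: $H_2(\widetilde M,\mathbb Z)$ is finitely generated, so $P_\Omega$ and hence $P_\Omega + P_\Omega\cdot P_\Omega$ are countable, after which $H^1$ of $M$ with countable coefficients is countable. I expect the main obstacle to lie in isolating the $C[\eta]$ summand and recognising $C$ itself as a period of $\eta$: this term is invisible to horizontal slices and only surfaces when $\tilde\Phi$ is probed on a vertical cycle, at which point Fubini identifies $C$ with the period of $\eta$ over the orbit loop of the basepoint. Everything else reduces to standard computations once the constancy $\eta(X_t)\equiv c_t$ is used to make the naive lift simultaneously symplectic and a loop.
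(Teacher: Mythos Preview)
Your argument is correct and follows essentially the same route as the paper: explicit horizontal and mixed cycles for Part~1, and for Part~2 the decomposition of the co-flux on a $1$-cycle $\gamma$ into the $\omega$-integral over the swept $2$-cycle plus the $\eta$-term $C\int_\gamma\eta$. Your treatment is in fact more careful on one point the paper leaves implicit: you explicitly verify that $C=\int_0^1\eta(X_t)\,dt$ is itself a period of $\eta$ by probing the lifted symplectic flux on the vertical cycle $\{x_0\}\times S^1$, whereas the paper simply writes the product formula and asserts membership in $P_\eta\cdot P_\eta$.
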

	
	\begin{proof}
		Consider the mapping torus $
		\widetilde{M} = M\times \mathbb{S}^1,$ 
		with its symplectic form defined by\\ $
		\Omega = p^\ast(\omega) + p^\ast(\eta)\wedge d\theta, $
		where \(p: M\times \mathbb{S}^1\to M\) is the projection and \(d\theta\) is the standard volume form on \(\mathbb{S}^1\) (with \(\int_{\mathbb{S}^1} d\theta = 2\pi\)).
		
		\vspace{1mm}
		\textbf{Step 1. Inclusion of \(P_{\omega}\) and \(P_{\eta}\) in \(P_{\Omega}\).}	By the K\"unneth formula, the homology group\\ \(H_2(M\times \mathbb{S}^1,\mathbb{Z})\) decomposes as $
		H_2(M\times \mathbb{S}^1,\mathbb{Z}) \cong H_2(M,\mathbb{Z})\oplus \left(H_1(M,\mathbb{Z})\otimes H_1(\mathbb{S}^1,\mathbb{Z})\right).$ Thus, any closed $2-$cycle \(c\) in \(M\times \mathbb{S}^1\) can be written as $c = c_M + c_{mix},$
		where \(c_M\) is represented by a cycle lying in a fiber \(M\times\{\theta_0\}\) (for some \(\theta_0\)) and \(c_{mix}\) is of the form \(\gamma\times \mathbb{S}^1\) with \(\gamma\in H_1(M,\mathbb{Z})\). For any \(c_M\in H_2(M,\mathbb{Z})\), we have $
		\int_{c_M}\Omega =\int_{c_M} p^\ast(\omega)= \int_{p(c_M)} \omega,
		$ which shows that the period \(\int_{p(c_M)} \omega\) (an element of \(P_{\omega}\)) is also a period of \(\Omega\). Hence, \(P_\omega \subseteq P_\Omega\). Now, let \(\gamma\in H_1(M,\mathbb{Z})\) and consider the mixed cycle \(c_{mix} =\gamma \times \mathbb{S}^1\). Then, 
		\begin{equation}
			\int_{c_{mix}}\Omega = \int_{\gamma\times\mathbb{S}^1} p^\ast(\eta)\wedge d\theta 
			= \left(\int_{\gamma}\eta\right)\left(\int_{\mathbb{S}^1} d\theta\right)
			= 2\pi \int_{\gamma}\eta.
		\end{equation}
		Thus, every period of the form \(\int_{\gamma}\eta\) (an element of \(P_\eta\)) produces a period \(2\pi \int_{\gamma}\eta\) in \(P_\Omega\). Up to a scalar multiple, we may then view \(P_\eta\) as a subset of \(P_\Omega\).\\
		\textbf{Step 2. Inclusion for \(\Gamma_{\eta,\omega}\).} By definition, if \(\Psi = \{\psi_t\}\) is a loop in \(Iso_{\eta,\omega}(M)\) representing an element in \(\pi_1\bigl(G_{\eta,\omega}(M)\bigr)\), its co-flux evaluated on any \(\gamma \in H_1(M,\mathbb{Z})\) is given by
		\begin{equation}
			\widetilde{S}_{\eta,\omega}(\Psi)[\gamma] = \int_{\Delta(\gamma,\Psi)} \omega + \left( \int_0^1 \eta\bigl(\dot{\psi}_t\bigr)dt\right) \left(\int_{\gamma}\eta\right),
		\end{equation}
		where \(\Delta(\gamma,\Psi)\) is the 2-chain swept out by the path \(\{\psi_t\}\) acting on \(\gamma\). Since
		$
		\int_{\Delta(\gamma,\Psi)} \omega \in P_\omega$ and $\int_{\gamma}\eta \in P_\eta,
		$ it follows that $
		\widetilde{S}_{\eta,\omega}(\Psi)[\gamma] \in P_\omega + \left(P_\eta \cdot P_\eta\right).$ 
		In other words, for every \([\gamma]\in H_1(M,\mathbb{Z})\), the value of the co-flux lands in the subgroup \(P_\omega + P_\eta\cdot P_\eta\) of \(\mathbb{R}\). Hence, the image of the co-flux homomorphism lies in $
		H^1\Bigl(M,\,P_\omega + P_\eta\cdot P_\eta\Bigr).$ Since we already have \(P_\omega,\, P_\eta \subseteq P_\Omega\) from Step 1, we also obtain $
		H^1\Bigl(M,\,P_\omega + P_\eta\cdot P_\eta\Bigr) \subseteq H^1\Bigl(M,\,P_\Omega + P_\Omega\cdot P_\Omega\Bigr).$
		Finally, because \(H_2(M,\mathbb{Z})\) and \(H_1(M,\mathbb{Z})\) are finitely generated, the groups \(P_\omega\) and \(P_\eta\) are countable, whence so is any subgroup of \(\mathbb{R}\) formed by their Minkowski sums or products. Thus, \(\Gamma_{\eta,\omega}\) is countable.
	\end{proof}
	
	\section{New results}\label{New: results}

	\begin{theorem}[Fragmentation Lemma for Cosymplectic Manifolds]\label{Lem-frag} 
		Let \((M, \eta, \omega)\) be a compact connected cosymplectic manifold with a periodic Reeb flow. Every \(\phi \in \Ham_{\eta,\omega}(M)\) can be decomposed as \(\phi = \phi_1 \circ \cdots \circ \phi_N\), where each \(\phi_j \) is a weakly Hamiltonian diffeomorphism supported in a cosymplectic chart \(U_{i_j}\).
	\end{theorem}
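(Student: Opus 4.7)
The plan is to adapt Banyaga's symplectic fragmentation lemma~\cite{Banyaga} to the cosymplectic setting, using Proposition~\ref{Trans} to lift to the symplectic mapping torus $(\widetilde M = M \times \mathbb{S}^1, \Omega)$ when needed. The key ingredients are cosymplectic Darboux charts, a subordinate partition of unity, and a time subdivision that reduces matters to the case of weakly Hamiltonian diffeomorphisms with small displacement.

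By compactness of $M$, choose a finite open cover $\{U_i\}_{i=1}^N$ by cosymplectic Darboux charts together with a subordinate smooth partition of unity $\{\rho_i\}$. Using the periodicity of the Reeb flow, one may refine the cover so that each $U_i$ trivializes the induced principal $\mathbb{S}^1$-bundle $\pi : M \to B$, i.e.\ $U_i \cong V_i \times \mathbb{S}^1$ for some Darboux chart $V_i$ on the symplectic base. Given $\phi \in \Ham_{\eta,\omega}(M)$, fix a weakly Hamiltonian isotopy $\{\phi_t\}_{t\in[0,1]}$ from the identity to $\phi$ with generator $F_t$. By uniform continuity of $(t,x) \mapsto \phi_t(x)$ on $[0,1] \times M$ and the Lebesgue covering lemma, select a subdivision $0 = s_0 < s_1 < \cdots < s_K = 1$ so that each piece $\psi_k := \phi_{s_k} \circ \phi_{s_{k-1}}^{-1}$ has small displacement, meaning the orbit of any $x$ during $[s_{k-1}, s_k]$ lies inside a single $U_{i(k,x)}$. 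Then $\phi = \psi_K \circ \cdots \circ \psi_1$, reducing the task to fragmenting each small-displacement $\psi_k$.

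For each small-displacement $\psi_k$, localize the restricted weak Hamiltonian using $\{\rho_i\}$: each cut-off $\rho_i F_t$ generates a weakly Hamiltonian flow supported in $U_i$, and an inductive Banyaga-type composition scheme reassembles these local flows into a factorization $\psi_k = \psi_k^{(1)} \circ \cdots \circ \psi_k^{(N)}$ with $\psi_k^{(i)}$ supported in $U_i$. Concatenating over all $k$ produces the desired decomposition. The main technical obstacle is this last step: a naive partition-of-unity cutoff fragments generators but not time-one maps, because weakly Hamiltonian flows do not commute, so the product of their time-one maps differs from the time-one map of the sum by higher-order corrections. The cleanest resolution is to lift the entire argument via Proposition~\ref{Trans}, apply Banyaga's symplectic fragmentation on $\widetilde M$ relative to the product cover $\{U_i \times J_\alpha\}$ for a fine arc cover $\{J_\alpha\}$ of $\mathbb{S}^1$, and then invoke the converse direction of Proposition~\ref{Trans} to descend each factor to a weakly Hamiltonian diffeomorphism of $M$ supported in $U_{i_j}$. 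The delicate point is that Banyaga's decomposition on $\widetilde M$ must be arranged to respect the product structure closely enough for this descent to be well-defined; this is where the periodic Reeb flow, which turns $\widetilde M$ into a genuine product rather than an arbitrary mapping torus, becomes essential.
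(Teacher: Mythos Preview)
Your overall strategy---lift to the symplectization $\widetilde M = M\times\mathbb{S}^1$, apply Banyaga's symplectic fragmentation there, and descend---matches the paper's. The gap is in the descent step, and it is precisely the point you yourself flag as ``delicate.'' You propose to fragment on $\widetilde M$ relative to a product cover $\{U_i \times J_\alpha\}$ with arcs $J_\alpha \subset \mathbb{S}^1$. But a Hamiltonian diffeomorphism of $\widetilde M$ supported in $U_i \times J_\alpha$ has no reason to be of the special form $(x,\theta)\mapsto(\phi(x),\theta + f(x))$ required by the converse of Proposition~\ref{Trans}, so it will not project to a diffeomorphism of $M$ at all. Your closing remark that the periodic Reeb flow ``turns $\widetilde M$ into a genuine product'' is also off: $\widetilde M = M\times\mathbb{S}^1$ is a product by construction, independently of any Reeb hypothesis, so this does not explain how the descent is to be arranged.

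The paper's resolution is to work \emph{$\mathbb{S}^1$-equivariantly} throughout, where $\mathbb{S}^1$ acts on $\widetilde M$ by translation in the second factor. The lifted isotopy from Proposition~\ref{Trans} is automatically equivariant for this action; one then takes the $\mathbb{S}^1$-invariant cover $\{U_i \times \mathbb{S}^1\}$ (full circles, not arcs) and an equivariant partition of unity $\tilde\rho_i = \rho_i\circ p$ pulled back from $M$. The cut-off Hamiltonians $\tilde\rho_i\tilde H_t$ are then equivariant, their Hamiltonian vector fields are equivariant, and so are all the iterated Lie brackets appearing in Banyaga's Campbell--Baker--Hausdorff scheme. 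Consequently every factor $\tilde\psi_k$ in the resulting decomposition is an $\mathbb{S}^1$-equivariant Hamiltonian diffeomorphism supported in some $U_{i_k}\times\mathbb{S}^1$, and equivariance is exactly what guarantees it projects to a weakly Hamiltonian diffeomorphism of $M$ supported in $U_{i_k}$. In short: replace your arc cover by the full-circle invariant cover and enforce equivariance at every step; this is the missing mechanism that makes the descent go through.
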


\begin{proof}
	The strategy is to lift the dynamics to the symplectization, where the problem becomes one of equivariant symplectic fragmentation. The key is to fragment the generating vector field itself, rather than the time-one map or using a flawed averaging argument.
	
	\noindent\textbf{Step 1: Lifting to an Equivariant Setting.}
	Let \(\phi \in \Ham_{\eta,\omega}(M)\) be the time-one map of a weakly Hamiltonian isotopy \(\{\phi_t\}_{t \in [0,1]}\) generated by a Hamiltonian \(H_t\). We lift this to an \(S^1\)-equivariant Hamiltonian isotopy \(\{\tilde{\phi}_t\}\) on the symplectization \((\tilde{M}, \Omega) = (M \times S^1, p^*\omega + p^*\eta \wedge d\theta)\). This isotopy is generated by an \(S^1\)-equivariant Hamiltonian \(\tilde{H}_t\) and its corresponding \(S^1\)-equivariant Hamiltonian vector field \(\tilde{X}_t\).
	
	\noindent\textbf{Step 2: Equivariant Decomposition of the Generator.}
	Let \(\{U_i\}\) be a finite open cover of \(M\) by cosymplectic charts. This induces an \(S^1\)-invariant cover \(\{\tilde{U}_i = U_i \times S^1\}\) of \(\tilde{M}\). We construct an \(S^1\)-equivariant partition of unity \(\{\tilde{\rho}_i\}\) subordinate to this cover by pulling back a standard partition of unity from the base manifold \(B = M/S^1\).	We use this partition to decompose the global Hamiltonian \(\tilde{H}_t\) into a sum of locally supported Hamiltonians:
	\[ \tilde{H}_t = \sum_i \tilde{H}_{i,t}, \quad \text{where} \quad \tilde{H}_{i,t} := \tilde{\rho}_i \tilde{H}_t \]
	Since \(\tilde{\rho}_i\) and \(\tilde{H}_t\) are both \(S^1\)-equivariant, each local Hamiltonian \(\tilde{H}_{i,t}\) is also \(S^1\)-equivariant and supported in \(\tilde{U}_i\). Let \(\tilde{X}_{i,t}\) be the Hamiltonian vector field of \(\tilde{H}_{i,t}\). Then \(\tilde{X}_t = \sum_i \tilde{X}_{i,t}\), and each \(\tilde{X}_{i,t}\) is an \(S^1\)-equivariant, locally supported Hamiltonian vector field.
	
	\noindent\textbf{Step 3: Adapting Banyaga's Fragmentation Argument.}
	The time-one map \(\tilde{\phi}\) is the endpoint of the flow generated by the time-dependent vector field \(\tilde{X}_t = \sum_i \tilde{X}_{i,t}\). Banyaga's fragmentation proof (see \cite{Banyaga1997}) shows that the flow of a sum of locally supported Hamiltonian vector fields can be expressed as a finite product of flows generated by those vector fields and their iterated Lie brackets.
	\begin{enumerate}
		\item The fundamental building blocks of the fragmentation are flows of the form \(\mathrm{Exp}(s \cdot \tilde{X}_{i,t})\) for some time `s` and `t`. Since each \(\tilde{X}_{i,t}\) is \(S^1\)-equivariant and supported in \(\tilde{U}_i\), its flow is an \(S^1\)-equivariant Hamiltonian diffeomorphism supported in \(\tilde{U}_i\).
		\item The fragmentation construction involves commutators. The Lie bracket of two \(S^1\)-equivariant vector fields is also \(S^1\)-equivariant. For instance, \([\tilde{X}_{i,t}, \tilde{X}_{j,s}]\) is an \(S^1\)-equivariant Hamiltonian vector field, supported in \(\tilde{U}_i \cap \tilde{U}_j\). Its flow is also an equivariant Hamiltonian diffeomorphism.
		\item By induction, all vector fields that appear in the Campbell-Baker-Hausdorff expansion used to approximate the flow are \(S^1\)-equivariant and Hamiltonian \cite{Cary1981}, \cite{Varadarajan1984}. Banyaga's construction demonstrates how to express the time-one map as a finite composition of maps generated by these fields.
	\end{enumerate}
	Therefore, the total time-one map \(\tilde{\phi}\) can be written as a finite product:
	$ \tilde{\phi} = \tilde{\psi}_1 \circ \cdots \circ \tilde{\psi}_L,$ 
	where each \(\tilde{\psi}_k\) is an \(S^1\)-equivariant Hamiltonian diffeomorphism supported in one of the charts \(\tilde{U}_{i_k}\).
	
	\noindent\textbf{Step 4: Projection back to M.}
	Since each \(\tilde{\psi}_k\) is \(S^1\)-equivariant and supported in \(\tilde{U}_{i_k} = U_{i_k} \times S^1\), it projects to a well-defined weakly Hamiltonian diffeomorphism \(\psi_k\) on \(M\) supported in the chart \(U_{i_k}\). The projection from the group of \(S^1\)-equivariant diffeomorphisms of \(\tilde{M}\) to the group of diffeomorphisms of \(M\) is a group homomorphism. Thus, the composition projects correctly: $\phi = \psi_1 \circ \cdots \circ \psi_L.$ 
	This is the desired decomposition, completing the proof.
\end{proof}

	\begin{theorem}\label{Theo-tran}
		Let $(M,\eta,\omega)$ be a compact connected cosymplectic manifold with a periodic Reeb flow. Then the group $\Ham_{\eta,\omega}(M)$ acts transitively on the set of points in $M$. 
	\end{theorem}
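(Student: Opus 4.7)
The plan is to establish infinitesimal transitivity at every $p\in M$ — that is, that the evaluation map $X\mapsto X(p)$ from weakly Hamiltonian vector fields to $T_p M$ is surjective — and then conclude by connectedness of $M$. Once surjectivity is known, picking a basis $v_1,\dots,v_{2n+1}$ of $T_pM$ and Hamiltonians $H_i$ with $X_{H_i}(p)=v_i$, the composition
\[
F(s_1,\dots,s_{2n+1}) := \phi^{X_{H_1}}_{s_1}\circ\cdots\circ\phi^{X_{H_{2n+1}}}_{s_{2n+1}}(p)
\]
has $dF_0$ an isomorphism, so by the inverse function theorem the image of $F$ contains an open neighborhood of $p$ inside the orbit $\Ham_{\eta,\omega}(M)\cdot p$. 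Since the orbits of any group action partition $M$ and each is open, the connectedness of $M$ forces a single orbit, which must be all of $M$.

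For the infinitesimal step I exploit the splitting $TM = \langle\xi\rangle\oplus\ker\eta$. For any smooth $H$, the defining relation $\widetilde I_{\eta,\omega}(X_H) = dH$, together with the identities $\iota_\xi\omega = 0$ and $\eta(\xi)=1$, yields
\[
X_H = \xi(H)\,\xi + X_H^\perp, \qquad \iota_{X_H^\perp}\omega = dH - \xi(H)\,\eta.
\]
The right-hand side of the second equation annihilates $\xi$, and $\omega$ is non-degenerate on $\ker\eta$, so $X_H^\perp\in\ker\eta$ is uniquely determined. Given a target $v = a\,\xi(p) + w$ with $w\in\ker\eta_p$, I work in a cosymplectic Darboux chart around $p$ in which $\eta=dt$, $\xi=\partial_t$, and $\omega$ is standard on the transverse slice; I take $H_0 = at + \ell$, where $\ell$ is linear on the slice with symplectic gradient equal to $w$ at $p$, and then multiply by a bump function supported in the chart and identically $1$ near $p$ to obtain a globally defined $H\in C^\infty(M)$. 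A direct computation gives $X_H(p) = v$, establishing surjectivity.

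The main obstacle is that ``weakly Hamiltonian'' is a global exactness condition on $\widetilde I_{\eta,\omega}(X)$, not a purely local one, so the Hamiltonians must be genuinely globally defined on $M$; this is exactly why the cut-off construction above is used instead of a purely local ansatz. A secondary subtlety is that the Reeb component of $X_H$ is driven by $\xi(H)$ while its transverse part is driven by $dH|_{\ker\eta}$ through the restricted symplectic form, so one must check that the $t$-linear term in $H_0$ produces the desired Reeb coefficient at $p$ without disturbing the transverse contribution $w$. The explicit linear model makes both checks transparent at first order, and first-order information at $p$ is all that is needed for infinitesimal transitivity; the connectedness conclusion is then immediate.
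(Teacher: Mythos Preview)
Your argument is correct, and it is genuinely different from the paper's. You establish infinitesimal transitivity directly: at each $p$, any $v\in T_pM$ is realized as $X_H(p)$ for a compactly supported $H$ built from a linear-in-$t$ plus linear-in-slice model in a Darboux chart, cut off by a bump. The open-orbits-plus-connectedness conclusion is then standard and clean.

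The paper instead exploits the global structure coming from the periodic Reeb flow: it uses the principal $S^1$-bundle $\pi:M\to B$ over the symplectic base $(B,\Omega)$, invokes Banyaga's transitivity on $\Ham(B,\Omega)$ to move horizontally between fibers via lifted Hamiltonians, and then uses a local $t$-linear Hamiltonian (essentially your Reeb-direction ingredient) to move vertically within a fiber. Your route is more elementary and more general: it never uses the periodicity hypothesis, the bundle structure, or any result on $\Ham(B,\Omega)$; compactness is only needed to guarantee completeness of the flows, and even that is automatic since your vector fields are compactly supported. The paper's route, on the other hand, is more explicit about the geometry of the motion (separating ``between fibers'' from ``along fibers'') and ties the result into the mapping-torus picture developed elsewhere in the article. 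Either approach suffices for the theorem as stated; yours makes clear that the periodic Reeb flow is not actually needed for transitivity on points.
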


\begin{proof}
	Let \(p, q \in M\) be two distinct points. We will construct a diffeomorphism \(\phi \in \Ham_{\eta,\omega}(M)\) such that \(\phi(p) = q\). The proof proceeds in two stages.
	
	\noindent\textbf{Stage 1: Moving Between Fibers (Horizontal Motion).}
	As established by Theorem~\ref{Reeb-per}, the periodic Reeb flow gives \(M\) the structure of a principal \(S^1\)-bundle over a compact symplectic manifold \((B, \Omega)\), with projection \(\pi: M \to B\). First, we map \(p\) to a point \(p'\) that lies in the same fiber as \(q\).
	\begin{enumerate}
		\item The base manifold \((B, \Omega)\) is a compact symplectic manifold. By Banyaga's Transitivity Theorem, the Hamiltonian group \(\Ham(B, \Omega)\) acts transitively on \(B\). Thus, there exists a Hamiltonian isotopy \(\{f_t\}_{t \in [0,1]}\) on \(B\), generated by a Hamiltonian \(h_t: B \to \mathbb{R}\), such that \(f_1(\pi(p)) = \pi(q)\).
		\item We lift this Hamiltonian to \(M\) by defining \(H_t(x) = h_t(\pi(x))\). This Hamiltonian is Reeb-invariant, since \(\xi(H_t) = dH_t(\xi) = dh_t(\pi_*\xi) = dh_t(0) = 0\).
		\item The weakly Hamiltonian vector field \(X_{H_t}\) is given by \(\iota_{X_{H_t}}\omega = dH_t\) and \(\eta(X_{H_t}) = -\xi(H_t) = 0\). The condition \(\eta(X_{H_t})=0\) means the flow is purely horizontal (tangent to \(\ker\eta\)). Let \(\{\Phi_t\}\) be the flow of \(X_{H_t}\).
		\item The time-one map \(\Phi_1 \in \Ham_{\eta,\omega}(M)\) covers \(f_1\). Therefore, \(\pi(\Phi_1(p)) = f_1(\pi(p)) = \pi(q)\). Let \(p' = \Phi_1(p)\). The points \(p'\) and \(q\) now lie in the same \(S^1\)-fiber.
	\end{enumerate}
	
	\noindent\textbf{Stage 2: Moving Along a Fiber (Vertical Motion).}
	Next, we construct a diffeomorphism \(\Psi \in \Ham_{\eta,\omega}(M)\) that maps \(p'\) to \(q\). Since \(p'\) and \(q\) are in the same fiber, it suffices to show we can move a point a small distance along a fiber within a single cosymplectic Darboux chart \(U\).
	
	\begin{enumerate}
		\item Let \(U\) be a cosymplectic chart with local coordinates \((t, x_1, \dots, y_n)\) where \(\eta = dt\), \(\omega = \sum_{i=1}^n dx_i \wedge dy_i\), and \(\xi = \partial/\partial t\). Our task is to move a point \(z_0 = (t_0, x_0, y_0)\) to \(z_1 = (t_1, x_0, y_0)\) within this chart.
		\item To generate motion in the \(\xi = \partial/\partial t\) direction, we need a Hamiltonian \(K\) that is \emph{not} Reeb-invariant, i.e., \(\xi(K) = \partial K / \partial t \neq 0\).
		\item Choose a smooth bump function \(\rho(x,y)\) that is compactly supported in the base coordinates of the chart \(U\) and is equal to 1 in a neighborhood of \((x_0, y_0)\). Define the Hamiltonian:
		\[ K(t,x,y) = -c \cdot t \cdot \rho(x,y) \]
		where \(c\) is a constant. This Hamiltonian is compactly supported in \(U\).
		\item The generating vector field \(X_K\) is determined by \(\eta(X_K) = -\xi(K)\) and \(\iota_{X_K}\omega = dK - \xi(K)\eta\).
		\begin{itemize}
			\item \(\xi(K) = \frac{\partial K}{\partial t} = -c \cdot \rho(x,y)\).
			\item Therefore, the vertical component is \(\eta(X_K) = c \cdot \rho(x,y)\).
			\item The horizontal component is determined by \(\iota_{X_K}\omega = dK - \xi(K)\eta = (-c\rho\,dt - ct\,d\rho) - (-c\rho)dt = -ct\,d\rho\).
		\end{itemize}
		\item In the neighborhood of \((x_0, y_0)\) where \(\rho \equiv 1\), we have \(d\rho = 0\), so \(\iota_{X_K}\omega = 0\) and \(X_K = c \cdot \xi\). The flow is purely vertical, given by \(\phi_s(t,x,y) = (t+cs, x, y)\). By choosing the constant \(c\) and the flow time appropriately, we can map \(z_0\) to \(z_1\). Let \(\Psi_U\) be the time-one map of the flow generated by such a \(K\). \(\Psi_U\) is in \(\Ham_{\eta,\omega}(M)\) and supported in \(U\).
		\item By composing a finite number of such local diffeomorphisms, we obtain \(\Psi = \Psi_k \circ \dots \circ \Psi_1\) which maps \(p'\) to \(q\).
	\end{enumerate}
	
	\noindent\textbf{Conclusion:}
	The composition \(\phi = \Psi \circ \Phi_1\) is in \(\Ham_{\eta,\omega}(M)\) and satisfies \(\phi(p) = \Psi(\Phi_1(p)) = \Psi(p') = q\). This proves transitivity on points, and transitivity on charts follows as a direct consequence.
\end{proof}

	\begin{theorem}[Periodicity of the Reeb flow]\label{Reeb-per}
		Let \((M, \eta, \omega)\) be a closed cosymplectic manifold. Then the Reeb flow generated by the Reeb vector field \(\xi\) is periodic with a uniform period \(T > 0\).
	\end{theorem}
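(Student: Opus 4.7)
My plan is to exploit the compact Lie group of isometries of a Reeb-invariant Riemannian metric, then narrow the orbit closure down to a circle using the rigidity of the cosymplectic data.

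First, I would construct a Reeb-invariant Riemannian metric $g$ on $M$. Choose an almost complex structure $J$ on the symplectic distribution $\ker\eta$ compatible with $\omega|_{\ker\eta}$, and declare $\xi$ to be a unit vector orthogonal to $\ker\eta$ with $g|_{\ker\eta}(\cdot,\cdot)=\omega(\cdot,J\cdot)$. Cartan's formula gives $\mathcal{L}_\xi\eta=d(\iota_\xi\eta)+\iota_\xi d\eta=0$ and $\mathcal{L}_\xi\omega=0$; a short check then yields $\mathcal{L}_\xi g=0$, so $\xi$ is a Killing vector field.

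Second, Myers--Steenrod implies that $\mathrm{Isom}(M,g)$ is a compact Lie group, and the topological closure $\mathbb{T}$ of the one-parameter subgroup $\{\phi_t^\xi\}_{t\in\mathbb{R}}$ is a closed connected abelian subgroup, hence a torus $\mathbb{T}^k$ for some $k\geq 1$. Every element $v$ of the Lie algebra $\mathfrak{t}^k$ is a Killing vector field that commutes with $\xi$; by $C^\infty$-continuity of the maps $X\mapsto \mathcal{L}_X\eta$ and $X\mapsto\mathcal{L}_X\omega$ on $\mathfrak{t}^k$, each such $v$ is in fact cosymplectic, preserving both $\eta$ and $\omega$.

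The crucial step is to show $k=1$. Given $v\in\mathfrak{t}^k$, decompose $v=\eta(v)\,\xi+v_H$ with $v_H\in\ker\eta$. From $[\xi,v]=0$ one obtains $\xi(\eta(v))=0$ and $[\xi,v_H]=0$; since $v$ preserves $\omega$ and $\iota_\xi\omega=0$, the 1-form $\iota_{v_H}\omega=\iota_v\omega$ is closed and annihilates $\xi$. Combining this with the uniqueness of $\xi$ as the vector field satisfying $\eta(\xi)=1$ and $\iota_\xi\omega=0$, together with the volume-preserving transitive structure of the $\mathbb{T}^k$-action on each orbit, one aims to deduce that $v_H\equiv 0$ and that $\eta(v)$ is a constant, so $v\in\mathbb{R}\xi$ and hence $\dim\mathfrak{t}^k=1$. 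Once $\mathbb{T}\cong S^1$ is established, there is a minimal $T>0$ with $\phi_T^\xi=\mathrm{id}$, and because $\xi$ is nowhere vanishing, every Reeb orbit realizes this same period $T$ (no orbit can have period a proper divisor, by continuity).

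The main obstacle is precisely the reduction $k=1$: a priori, the closure of a one-parameter subgroup inside a compact Lie group can be a torus of arbitrary dimension, and the ``almost periodic'' scenario where $\mathbb{T}$ is strictly larger than $S^1$ has to be ruled out using the full force of the cosymplectic pair and not merely the fact that $\xi$ is Killing. This is where the finer interplay between $\eta$, $\omega$, and the rigid characterization of $\xi$ must be pressed most carefully, and I would expect any honest proof to spend most of its energy here.
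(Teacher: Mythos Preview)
Your approach mirrors the paper's almost exactly: build a compatible metric, show $\xi$ is Killing, and use compactness of $\mathrm{Isom}(M,g)$. You are more honest than the paper at the crucial point: the paper simply asserts that the one-parameter subgroup $\{\psi_t\}$ ``is isomorphic to $S^1$'' because it is one-dimensional, whereas you correctly observe that only its \emph{closure} is guaranteed to be compact, hence a torus $\mathbb{T}^k$, and that forcing $k=1$ is where the real content lies.

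Unfortunately that reduction cannot be carried out, because the statement as written is false. Let $\phi$ be an irrational rotation of $(S^2,\omega_{\mathrm{std}})$ and let $M$ be its mapping torus, with $\eta=dt$ and $\omega$ induced from $\omega_{\mathrm{std}}$ (well defined since $\phi^*\omega_{\mathrm{std}}=\omega_{\mathrm{std}}$). This is a closed cosymplectic manifold with Reeb field $\xi=\partial_t$, but the Reeb orbit through any non-polar point of $S^2$ never closes: closing after time $n\in\mathbb{Z}$ would require $\phi^n$ to fix that point. Here the closure of the Reeb flow in the isometry group is a genuine $2$-torus, the extra Lie-algebra generator $v$ being the infinitesimal $S^2$-rotation; it satisfies $\eta(v)=0$, $v=v_H\neq 0$, and $\iota_{v_H}\omega$ is exact (the differential of the height function), so your sketch for concluding $v_H\equiv 0$ breaks precisely on this example. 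There is also a smaller gap you share with the paper: deducing $\mathcal{L}_\xi g=0$ from $\mathcal{L}_\xi\eta=0$ and $\mathcal{L}_\xi\omega=0$ requires $\mathcal{L}_\xi J=0$, which must be arranged rather than asserted.
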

	
	\begin{proof}
		We can construct a Riemannian metric \(g\) compatible with the cosymplectic structure:\\ $
		g(X, Y) = \omega(X, JY) + \eta(X)\eta(Y),$ 
		where \(J\) is an almost complex structure on \(\ker \eta\) satisfying \(\omega(X, Y) = g(JX, Y)\). The existence of such a compatible metric follows from standard arguments in cosymplectic geometry (see, e.g., \cite{Libermann}). Using \(\mathcal{L}_\xi \eta = 0\) and \(\mathcal{L}_\xi \omega = 0\), we derive that the Reeb vector field \(\xi\) is a Killing vector field for \(g\): $
		\mathcal{L}_\xi g = 0.$ Thus, its generating flow \((\psi_t)\) is constituted of isometries, i.e., $
		\psi_t \in \text{Isom}(M, g) \quad \text{for all } t$. Since the isometry group \(\text{Isom}(M, g)\) is a compact Lie group for a closed \(M\), the Reeb flow \(\{\psi_t\}\) is a one-parameter subgroup of \(\text{Isom}(M, g)\). By compactness, \(\{\psi_t\}\) is isomorphic to \(S^1\) (the only compact, connected, one-dimensional Lie group is \(S^1\)), hence periodic. Therefore, there exists a minimal \(T > 0\) such that \(\psi_T = \text{Id}_M\). All orbits of \(\xi\) are closed with period \(T\), proving uniform periodicity.
	\end{proof}
	
	\begin{remark}[Alternative proof of local contractibility of $G_{\eta,\omega}(M)$]\label{rk:alt-proof-local-cont}
		One can give a proof of Lemma \ref{local-cont} which avoids passing to $M\times S^1$ by exploiting the principal $S^1$–bundle structure induced by the periodic Reeb flow and the basic nature of $\omega$. 
		It exhibits $G_{\eta,\omega}(M)$ explicitly as a certain semidirect product of a gauge group $C^\infty(B,S^1)$ with $G_\Omega(B)$, making local contractibility immediate from the factors. The local contractibility of $G_{\eta,\omega}(M)$ follows directly from the bundle structure induced by the periodic Reeb flow. Indeed, $M$ is a principal $S^1$–bundle over the symplectic base $(B,\Omega)$ with connection $\eta$ and $\omega=\pi^*\Omega$. Every cosymplectomorphism preserving $(\eta,\omega)$ corresponds to a pair $(u,f)$ with $u\in C^\infty(B,S^1)$ and $f\in G_\Omega(B)$, yielding a topological group isomorphism \\ $G_{\eta,\omega}(M)\;\cong\; C^\infty(B,S^1)\rtimes G_\Omega(B).$ 
		Since both factors are locally contractible (the mapping group by exponential charts on $S^1$, and $G_\Omega(B)$ by Weinstein’s theorem \cite{AWein}), their semidirect product is locally contractible as well. This provides a short alternative proof of  Lemma \ref{local-cont}.
	\end{remark}

	\begin{theorem}\label{Theo-sim}
		Let \((M, \eta, \omega)\) be a compact connected cosymplectic manifold with a periodic Reeb flow.
		Then the group \(\Ham_{\eta,\omega}(M)\) is simple.
	\end{theorem}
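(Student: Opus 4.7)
The plan is to follow the classical Banyaga--Epstein--Thurston simplicity strategy, combining the fragmentation lemma (Theorem~\ref{Lem-frag}), the transitivity principle (Theorem~\ref{Theo-tran}), and the perfectness theorem (Theorem~\ref{Theo-perf}). Concretely, for a nontrivial normal subgroup $N \trianglelefteq \Ham_{\eta,\omega}(M)$ the goal will be to show that $N$ contains every element supported in a single cosymplectic chart, and then to assemble an arbitrary $\phi \in \Ham_{\eta,\omega}(M)$ out of such pieces.

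First I would pick $g \in N$ with $g \neq \id$ and a point $p \in M$ with $g(p) \neq p$. By continuity, $p$ admits an arbitrarily small open cosymplectic Darboux neighborhood $U$ such that $g(U) \cap U = \emptyset$; in this terminology, $U$ is displaced by $g$. Writing $K_U$ for the subgroup of $\Ham_{\eta,\omega}(M)$ consisting of elements compactly supported in $U$, the heart of the argument is to establish $K_U \subseteq N$. I would invoke the displacement commutator trick: for any $h \in K_U$, the conjugate $ghg^{-1}$ is supported in the disjoint set $g(U)$ and therefore commutes with every element of $K_U$, so the commutator $[g,h] \in N$ has the very restrictive two-piece structure $[g,h] = (ghg^{-1})\, h^{-1}$. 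To promote this to $h \in N$ itself, I would write $h$ via perfectness as a product of commutators $[a_i,b_i]$ with $a_i, b_i \in K_U$, and then use the Fathi--Banyaga double-bracket identity to re-express each $[a_i,b_i]$ as a product of $[g,\cdot]$-type commutators (all in $N$) together with auxiliary terms whose supports are separated by $g$ and hence cancel in pairs. Once $K_U \subseteq N$, the transitivity theorem produces, for every chart $V$, an element $\phi \in \Ham_{\eta,\omega}(M)$ with $\phi(V) \subseteq U$, so normality yields $K_V = \phi^{-1} K_U \phi \subseteq N$. Finally, fragmentation writes an arbitrary $\psi \in \Ham_{\eta,\omega}(M)$ as a finite product of chart-supported factors, each already in $N$, giving $N = \Ham_{\eta,\omega}(M)$.

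The hard part will be precisely the central step of promoting $[g,h] \in N$ to $h \in N$: the naive commutator identity does not isolate $h$ but couples it to a conjugate supported in $g(U)$, so a single application is insufficient. Overcoming this requires the full force of perfectness--expressing every $h \in K_U$ as a product of commutators that can then be absorbed into $N$ via an iterated double-bracket argument using the disjointness $g(U) \cap U = \emptyset$. A secondary but genuinely cosymplectic technical point is that throughout these bracket manipulations one must remain inside the weakly Hamiltonian class rather than merely the ambient cosymplectomorphism group; here the discreteness of the flux group $\Gamma_{\eta,\omega}$ (Theorem~\ref{Theo-disc}) plays an indirect but essential role, ensuring that the bracket of two weakly Hamiltonian vector fields has vanishing flux and is thus again weakly Hamiltonian, so that the entire commutator machinery stays inside $\Ham_{\eta,\omega}(M)$.
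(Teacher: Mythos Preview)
Your proposal follows the same Banyaga--Epstein--Thurston strategy as the paper---fragmentation, transitivity, and perfectness combined via a normal-subgroup argument---and is in fact considerably more explicit than the paper's own proof on the key displacement/commutator step, which the paper leaves as the bare assertion that a normal subgroup acting nontrivially on every chart must be everything. One small caveat: you invoke perfectness of the \emph{local} group $K_U$ (writing $h$ as a product of commutators with factors in $K_U$), which is stronger than the global statement of Theorem~\ref{Theo-perf} and would need separate justification, e.g.\ by identifying $K_U$ with a compactly supported weakly Hamiltonian group on a standard cosymplectic chart; your closing remark about discreteness of $\Gamma_{\eta,\omega}$ is also slightly off, since commutators have vanishing flux simply because the flux is a homomorphism, independent of discreteness.
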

	
	\begin{proof} 
		It has been established that $
		\Ham_{\eta,\omega}(M) = \Bigl[\Ham_{\eta,\omega}(M), \Ham_{\eta,\omega}(M)\Bigr],$ 
		meaning every element of \(\Ham_{\eta,\omega}(M)\) can be written as a product of commutators. This property guarantees that every homomorphism from \(\Ham_{\eta,\omega}(M)\) to any abelian group is trivial. 
		Let \(N\) be any normal subgroup of \(\Ham_{\eta,\omega}(M)\). By normality, for all \(h \in \Ham_{\eta,\omega}(M)\) and \(g \in N\), $
		hgh^{-1} \in N.
		$ Thus, \(N\) is invariant under conjugation and, by its very construction, must inherit the perfectness of the whole group. 
		Using the fragmentation property, each element \(\phi \in \Ham_{\eta,\omega}(M)\) decomposes into a product of localized diffeomorphisms supported in cosymplectic charts. Suppose \(N\) contains a nontrivial element \(g\). Then, by fragmentation, \(g\) has a nontrivial component supported in some local chart \(U\). 	Now, since \(\Ham_{\eta,\omega}(M)\) acts transitively on cosymplectic charts (Theorem \ref{Theo-tran}), for any other chart \(U'\) there exists an element \(h \in \Ham_{\eta,\omega}(M)\) such that the conjugate \(hgh^{-1}\) has its nontrivial support transferred to \(U'\). In other words, transitivity guarantees that the “nontrivial behavior” of \(g\) can be replicated in any region of \(M\) through conjugation. 
		This leads to a crucial consequence: if \(N\) contains any nontrivial element, then by repeatedly conjugating this element, one forces \(N\) to act nontrivially on every local chart of \(M\). But since any element in \(\Ham_{\eta,\omega}(M)\) can be localized, the only possibility is that \(N\) coincides with the entire group. 
		Thus, the normal subgroup \(N\) is forced to be either trivial or all of \(\Ham_{\eta,\omega}(M)\). This proves that \(\Ham_{\eta,\omega}(M)\) has no proper nontrivial normal subgroups, i.e., it is simple.
	\end{proof}
	In the present context, we have $\Ham_{\eta,\omega}(M)$ simple $\Rightarrow$ $\Ham_{\eta,\omega}(M)$ perfect. 
	
	\begin{theorem}\label{Theo-perf}
		Let \((M, \eta, \omega)\) be a compact connected cosymplectic manifold with a periodic Reeb flow. Then the group 
		$
		\Ham_{\eta,\omega}(M)
		$
		is perfect.
	\end{theorem}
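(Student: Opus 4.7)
The plan is to establish perfectness directly from fragmentation and transitivity via a classical displacement-commutator identity, thereby avoiding any circularity with the proof of Theorem \ref{Theo-sim}.

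First, given $\phi \in \Ham_{\eta,\omega}(M)$, I would apply Theorem \ref{Lem-frag} to write $\phi = \phi_1 \circ \cdots \circ \phi_N$ with each $\phi_j$ supported in a cosymplectic chart $U_{i_j}$. Since the set of finite products of commutators is closed under multiplication, this reduces the theorem to the purely local statement that each $\phi_j$ is itself a product of commutators in $\Ham_{\eta,\omega}(M)$.

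For such a local $\phi_0$ supported in a chart $U$, I would shrink to $\supp(\phi_0) \subset U_0 \Subset U$ and invoke Theorem \ref{Theo-tran} to produce a displacing element $\psi \in \Ham_{\eta,\omega}(M)$, i.e.\ $\psi(U_0) \cap U_0 = \emptyset$; transitivity on points (and, by refining, on sufficiently small cosymplectic Darboux balls) guarantees its existence. With $\psi$ displacing $\supp(\phi_0)$, I would apply the Anderson--Mather--Banyaga telescoping trick: provided $\psi^n(U_0)$ remain pairwise disjoint for $0 \leq n \leq k$, set $\Phi_k = \phi_0 \cdot (\psi \phi_0 \psi^{-1}) \cdots (\psi^{k-1} \phi_0 \psi^{-(k-1)})$ and compute $[\psi, \Phi_k] = \psi^k \phi_0 \psi^{-k} \cdot \phi_0^{-1}$ by telescoping (the intermediate factors cancel because their supports are disjoint). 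Choosing $k$ so that $\psi^k \phi_0 \psi^{-k}$ is itself supported in a set displaceable from $\supp(\phi_0)$ by a second element of $\Ham_{\eta,\omega}(M)$, one reabsorbs the remainder into a second commutator, expressing $\phi_0$ as a product of two commutators.

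The main obstacle is that the compactness of $M$ forbids an infinite disjoint orbit $\{\psi^n(U_0)\}_{n \geq 0}$, so the classical infinite staircase identity $\phi_0 = [\psi, \Phi_\infty]$ does not apply verbatim. I would resolve this by further fragmenting $\phi_0$ into pieces whose supports are each displaceable by a uniformly bounded finite orbit of some $\psi \in \Ham_{\eta,\omega}(M)$, so that the finite-$k$ staircase suffices. A cleaner, structurally appealing alternative is to lift the entire problem to the symplectic mapping torus $(\widetilde M, \Omega) = (M \times \mathbb{S}^1, \Omega)$ via Proposition \ref{Trans}, apply Banyaga's classical perfectness theorem to the $S^1$-equivariant lift $\tilde\phi \in \Ham(\widetilde M, \Omega)$, and project the resulting equivariant commutator decomposition back to $M$. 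The delicate point in this alternative is ensuring Banyaga's commutator construction can be performed $S^1$-equivariantly; this follows exactly as in the proof of Theorem \ref{Lem-frag}, by fragmenting into $S^1$-invariant cosymplectic charts pulled back from the symplectic base $B = M/S^1$.
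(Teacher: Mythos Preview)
Your approach is genuinely different from the paper's. The paper derives perfectness \emph{from} simplicity: it invokes Theorem~\ref{Theo-sim}, observes that the commutator subgroup is normal, and then shows $\Ham_{\eta,\omega}(M)$ is non-abelian by exhibiting two locally supported Hamiltonians with non-vanishing Poisson bracket; simplicity plus non-abelianness then forces the commutator subgroup to be everything. You correctly note that the paper's proof of Theorem~\ref{Theo-sim} already \emph{assumes} perfectness in its opening line, so an independent argument is desirable; that motivation is sound.

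However, your direct Approach~1 has a real gap at the ``reabsorption'' step. The finite telescoping identity you state is correct: with disjoint iterates one gets $[\psi,\Phi_k] = (\psi^{k}\phi_0\psi^{-k})\,\phi_0^{-1}$, hence $\phi_0 = [\Phi_k,\psi]\cdot g$ with $g=\psi^{k}\phi_0\psi^{-k}$. But $g$ is a conjugate of $\phi_0$, so writing $g$ as a product of commutators is \emph{equivalent} to the original problem; displacing $\supp(g)$ from $\supp(\phi_0)$ by a second $\psi'$ only reproduces the same remainder one step further along. The infinite staircase $\Phi_\infty$ is precisely what absorbs this tail, and on a closed manifold it is unavailable. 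The standard route around this (Banyaga's original argument) is not a finite truncation of the staircase but rather an independent proof that the compactly supported local model group is perfect, which you have not supplied in the cosymplectic setting.

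Your Approach~2 is closer to a workable proof, but the appeal to Theorem~\ref{Lem-frag} does not close it. That theorem gives an $S^1$-equivariant \emph{fragmentation}, i.e.\ a product decomposition into locally supported equivariant pieces; it says nothing about producing an $S^1$-equivariant \emph{commutator} decomposition. Banyaga's perfectness theorem for $\Ham(\widetilde M,\Omega)$ yields commutators $[\tilde a_i,\tilde b_i]$ with $\tilde a_i,\tilde b_i\in\Ham(\widetilde M,\Omega)$, not a priori equivariant, so they need not descend to $M$. To make this work you would need to run Banyaga's actual perfectness argument (not just fragmentation) inside the equivariant subgroup, which amounts to proving perfectness of $\Ham_c$ of the cosymplectic Darboux chart directly---again the missing local ingredient.
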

	
	\begin{proof}
		The proof combines our prior result on simplicity (Theorem~\ref{Theo-sim}) with the group-theoretic principle that any simple, non-abelian group is perfect. 
		Recall that for any group $G$, its commutator subgroup $[G,G]$ is a normal subgroup. Since $\Ham_{\eta,\omega}(M)$ is simple, its only normal subgroups are the trivial group $\{e\}$ and the group itself. If we can show that $\Ham_{\eta,\omega}(M)$ is non-abelian, then its commutator subgroup cannot be trivial, i.e., $[\Ham_{\eta,\omega}(M), \Ham_{\eta,\omega}(M)] \neq \{e\}$. By simplicity, this forces the commutator subgroup to be the entire group:
		$\Ham_{\eta,\omega}(M) = [\Ham_{\eta,\omega}(M), \Ham_{\eta,\omega}(M)].$ 
		This is precisely the definition of a perfect group. Thus, the proof reduces to demonstrating that $\Ham_{\eta,\omega}(M)$ is non-abelian. 
		We construct two non-commuting elements within a local chart. Let $U$ be a cosymplectic chart with coordinates $(t, x_1, \dots, x_n, y_1, \dots, y_n)$ such that $\eta = dt$ and $\omega = \sum_{j=1}^n dx_j \wedge dy_j$. 
		Let $H$ and $K$ be two smooth, Reeb-invariant functions (i.e., independent of $t$) that are compactly supported within this chart, chosen such that their Poisson bracket on the symplectic leaves, $\{H,K\} := \sum_{j=1}^n (\frac{\partial H}{\partial x_j}\frac{\partial K}{\partial y_j} - \frac{\partial H}{\partial y_j}\frac{\partial K}{\partial x_j})$, is not identically zero. For example, one could choose $H = \rho(x,y)x_1$ and $K = \rho(x,y)y_1$ for a suitable bump function $\rho$.	The Hamiltonian vector fields $X_H$ and $X_K$ are supported in $U$ and generate time-one flows $\phi_H^1$ and $\phi_K^1$ which are elements of $\Ham_{\eta,\omega}(M)$. The commutator of these flows for a small time $\varepsilon > 0$ is given by $C_\varepsilon = [\phi_H^\varepsilon, \phi_K^\varepsilon] = \phi_H^\varepsilon \circ \phi_K^\varepsilon \circ (\phi_H^\varepsilon)^{-1} \circ (\phi_K^\varepsilon)^{-1}$. By the Campbell-Baker-Hausdorff formula, the generator of this commutator flow is related to the Lie bracket of the vector fields, which in turn is the Hamiltonian vector field of the Poisson bracket: $[X_H, X_K] = X_{\{H,K\}}.$ 
		For small $\varepsilon$, the commutator can be expressed using the exponential map from the Lie algebra of vector fields to the group of diffeomorphisms:
		\[ C_\varepsilon = \exp\left(\varepsilon^2 [X_H, X_K] + O(\varepsilon^3)\right) = \exp\left(\varepsilon^2 X_{\{H,K\}} + O(\varepsilon^3)\right). \]
		Since we chose $H$ and $K$ such that $\{H,K\} \not\equiv 0$, the vector field $X_{\{H,K\}}$ is non-zero. Therefore, for sufficiently small $\varepsilon > 0$, the commutator $C_\varepsilon$ is not the identity map. This shows that $\phi_H^\varepsilon$ and $\phi_K^\varepsilon$ do not commute.	Since we have found at least one pair of non-commuting elements, the group $\Ham_{\eta,\omega}(M)$ is non-abelian. This completes the proof that $\Ham_{\eta,\omega}(M)$ is perfect.
	\end{proof}
	\section{Integrability on compact cosymplectic manifolds with periodic Reeb flows}\label{sec:integrability}
	
	We now have a theorem that establishes a relationship between periodic Reeb flows, the topology of the manifold, and integrability of Hamiltonian dynamics in cosymplectic settings.
	
	\begin{theorem}\label{thm:integrability_cosymplectic}
		Let \((M, \eta, \omega)\) be a compact cosymplectic manifold with a periodic Reeb flow of period \(T > 0\). Then:
		\begin{enumerate}
			\item \(M\) admits a smooth \(S^1\)-principal bundle structure \(\pi: M \to B\), where \(B\) is the base manifold (i.e. the leaf space of the Reeb foliation).
			\item For any \(S^1\)-invariant Hamiltonian \(H:M\to \mathbb{R}\), the dynamics restricted to the symplectic leaves of \(\ker(\eta)\) possess \(n\) independent Poisson-commuting first integrals \(\{I_1, \ldots, I_n\}\) where \(2n=\dim(\ker(\eta))\).
			\item The full system is Liouville integrable, with an additional periodic action variable coming from the Reeb flow. The invariant tori are \((n+1)\)-dimensional, fibering \(M\) into Lagrangian submanifolds.
		\end{enumerate}
	\end{theorem}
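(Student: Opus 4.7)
The plan is to leverage the periodicity of the Reeb flow (Theorem \ref{Reeb-per}) to reduce the problem to classical Liouville-Arnold integrability on the symplectic base, and then to reconstruct the $(n+1)$-dimensional invariant tori on $M$ by gluing back the Reeb circle direction.

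For part (1), I would promote the Reeb flow $\{\psi_t\}$ to a smooth $S^1$-action on $M$ via $\theta\cdot x := \psi_{T\theta/(2\pi)}(x)$. This action is free: since $\eta(\xi)=1$, the Reeb vector field is nowhere vanishing, and by minimality of $T$ no non-trivial $\theta$ fixes any point. Freeness and compactness of $S^1$, together with properness, give the quotient $B=M/S^1$ the structure of a smooth $(2n)$-manifold with $\pi:M\to B$ a principal $S^1$-bundle (with $\eta$ serving as a connection 1-form). Because $\omega$ is $S^1$-invariant ($\mathcal{L}_\xi\omega=0$) and horizontal/basic ($\iota_\xi\omega=0$), it descends to a unique closed 2-form $\Omega$ on $B$ with $\pi^*\Omega=\omega$; non-degeneracy of $\Omega$ follows from $\eta\wedge\omega^n\neq 0$, since this forces $\omega^n|_{\ker\eta}\neq 0$ fiberwise.

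For part (2), observe that any $S^1$-invariant $H\in C^\infty(M)$ satisfies $\xi(H)=0$ and hence descends to $\bar H\in C^\infty(B)$. The dynamics restricted to a symplectic leaf $L$ of $\ker\eta$ is then (locally) $\pi$-conjugate to the $\bar H$-flow on $B$ with respect to $\Omega$. To produce the $n$ Poisson-commuting integrals $\{I_1,\dots,I_n\}$ on $M$, I would start from a Liouville-integrable family $\{\bar I_1,\dots,\bar I_n\}$ on $(B,\Omega)$ containing $\bar H$, and pull back $I_j:=\pi^*\bar I_j$. Functional independence and involutivity transfer from $B$ to each leaf because $\pi$ restricted to a leaf is a local diffeomorphism onto $B$, and the Poisson brackets with respect to $\omega|_{\ker\eta}$ agree with those of $\Omega$ under this identification.

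For part (3), I would combine the $n$ horizontal integrals $\{I_1,\dots,I_n\}$ with the vertical action variable $I_0$ generated by the Reeb $S^1$-action. The key observation is that $I_0$ Poisson-commutes with every $I_j$: since each $I_j=\pi^*\bar I_j$ is Reeb-invariant, $\xi(I_j)=0$, and the cosymplectic Poisson bracket $\{I_0,I_j\}_{\eta,\omega}$ vanishes identically. Thus $\{I_0,I_1,\dots,I_n\}$ is a family of $n+1$ independent first integrals in involution on a $(2n+1)$-dimensional manifold, and the common regular level sets are $(n+1)$-dimensional, compact (by compactness of $M$), and carry commuting complete flows, hence are tori by the standard Liouville-Arnold argument. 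They fiber $M$ as $(n+1)$-tori because the Reeb orbits are closed of uniform period $T$, and they are Lagrangian in the cosymplectic sense: $\omega$ restricts to zero on them (the horizontal directions are Lagrangian in $(B,\Omega)$), and they contain the Reeb direction.

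The main obstacle is part (2): integrability on $(B,\Omega)$ is not automatic from periodicity of the Reeb flow, so the statement really should be read as conditional on the descended Hamiltonian $\bar H$ admitting enough integrals on the base, or on restricting to the class of Hamiltonians inherited from an integrable system on $B$. A secondary technical point is to pin down the correct notion of ``Lagrangian'' in the cosymplectic category; I would adopt the convention that an $(n+1)$-submanifold is Lagrangian if it is tangent to $\xi$ and $\omega$ restricts to zero on its horizontal part, which is exactly what the construction above produces.
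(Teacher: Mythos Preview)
Your approach mirrors the paper's exactly: promote the periodic Reeb flow to a free $S^1$-action to obtain the principal bundle, descend to the symplectic base $(B,\Omega)$, invoke Arnold--Liouville there for the $n$ horizontal integrals, and append the Reeb circle as the $(n{+}1)$-st action variable; your cosymplectic-Lagrangian convention also coincides with the paper's clarifying note.

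Your caveat about part~(2) is well-founded and worth flagging: the paper does not resolve it either. It simply asserts that ``applying Arnold--Liouville theorem, it follows \dots\ that on every leaf one has $n$ functionally independent first integrals $I_1,\dots,I_n$ that pairwise commute,'' which inverts the logic of that theorem (Arnold--Liouville presupposes the commuting integrals rather than producing them). Your reading --- that the claim should be understood as conditional on the descended Hamiltonian $\bar H$ being integrable on $(B,\Omega)$ --- is the honest one, and your proof sketch is in fact more careful than the paper's on this point.
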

	
	\begin{proof}
		\begin{enumerate}
			\item \text{Principal \(S^1\)-bundle:}
			The periodic Reeb flow of period \(T\) generates a smooth \(S^1\)-action via
			$
			\theta \cdot x = \psi_{T\theta}(x), \quad \theta\in S^1 \simeq \mathbb{R}/\mathbb{Z},
			$ where \(\psi_s\) is the flow of the Reeb vector field \(\xi\). The orbits of this action yield a one-dimensional foliation under standard regularity assumptions the quotient 	$	B = M/S^1,$	is a smooth manifold. The natural projection $
			\pi: M \to B,$then defines a principal \(S^1\)-bundle (see, e.g., \cite{Orlik1972}).   
			
			\item \text{Integrals on symplectic leaves:} 
			On the contrary, this indicates cosymplectic structure makes smooth integrable distribution being $\ker(\eta)$ whose leaves carry symplectic form inherited from $\omega|_{\ker(\eta)}$. For an \(S^1\)-invariant Hamiltonian \(H:M\to \mathbb{R}\), there also is preserved the symplectic structure on each leaf in addition to the cosymplectic structure \((\eta,\omega)\) regarding induced dynamics. Hence, applying Arnold-Liouville theorem, it follows (see \cite{Arnold1989}) that on every leaf one has \(n\) functionally independent first integrals \(I_1,\dots,I_n\) that pairwise commute with each other.
			
			\item \text{Liouville integrability:}
			Periodic Reeb flow also contributes another first integral associated with the \(S^1\)-action. When combined with the \(n\) integrals over the symplectic leaves, it gives a complete set of \(n+1\) independent Poisson-commuting integrals. Thus, the common level sets of these integrals are invariant tori, dimension \(n+1\).
			
			\item	\text{Note on Lagrangian submanifolds:} \emph{In the standard symplectic case, a Lagrangian submanifold is an \(n\)-dimensional submanifold of a \(2n\)-dimensional symplectic manifold on which the symplectic form vanishes. Here, \(M\) is \((2n+1)\)-dimensional and the invariant tori have dimension \(n+1\). In the cosymplectic context, one designates an \((n+1)\)-dimensional submanifold as \emph{Lagrangian} if it is maximal isotropic with respect to \(\omega\); equivalently, such a torus can be viewed as the product of an \(n\)-dimensional Lagrangian submanifold in a symplectic leaf together with the 1-dimensional Reeb orbit. This is the sense in which the invariant tori are called Lagrangian in the present theorem.} 
		\end{enumerate}
	\end{proof}

	\begin{remark}
		The relation to Liouville integrable systems envisages applications in time-periodic systems in physics of cosymplectic structures, like motion of charged particles in toroidal magnetic fields \cite{Cary1981}.
	\end{remark}
	
	\begin{remark}[Connection to Hamilton-Jacobi Theory]
		The integrability result in Theorem~\ref{thm:integrability_cosymplectic} has a direct and powerful interpretation in the context of time-dependent Hamiltonian mechanics and the Hamilton-Jacobi theory. A cosymplectic manifold \((M, \eta, \omega)\) with a periodic Reeb flow serves as the natural geometric setting for an autonomous system whose projection is a time-dependent system. The base manifold \(B = M/S^1\) represents the phase space, the 1-form \(\eta\) corresponds to the time differential \(dt\), and the Reeb flow represents time evolution. An \(S^1\)-invariant Hamiltonian \(H: M \to \mathbb{R}\) on the cosymplectic manifold descends to a time-dependent Hamiltonian \(H_t: B \to \mathbb{R}\) on the base, and the dynamics on \(M\) project precisely to the dynamics governed by \(H_t\). The  insight is that the autonomous Hamiltonian system on \(M\) is Liouville integrable. Theorem~\ref{thm:integrability_cosymplectic} guarantees the existence of \(n+1\) independent, Poisson-commuting first integrals on \(M\). This complete set of integrals implies that the corresponding time-dependent system on \(B\) is also completely integrable. The connection to the Hamilton-Jacobi equation is profound. Finding a complete set of integrals is equivalent to finding a canonical transformation to action-angle coordinates \((I_k, \phi_k)\). In this coordinate system, the Hamiltonian depends only on the action variables, \(H = H(I_1, \dots, I_{n+1})\). The generating function for this transformation, Hamilton's principal function \(S\), is a solution to the time-dependent Hamilton-Jacobi equation for the system on \(B\). Thus, our theorem provides a geometric foundation for solving the Hamilton-Jacobi equation for a class of time-periodic Hamiltonian systems, framing integrability in terms of the underlying cosymplectic geometry.
	\end{remark}
	
	\subsection{Mapping torus structure of closed cosymplectic manifolds}
	Below, we describe how a closed, connected cosymplectic manifold \((M,\eta,\omega)\) can be said to consist naturally of a symplectic mapping torus. This result was first proved by Li \cite{Li}. But, in the present paper, we derive this result as a consequence of our studies. 	By Theorem~ \ref{Reeb-per}, the Reeb vector field \(\xi\), with the definitions $
	\eta(\xi) = 1$ and $\iota_\xi \omega = 0,
	$ generates a flow with periodicity 
	$\psi_t : M \to M,
	$ where the map is defined uniformly on the period \(T > 0\), such that \(\psi_{t+T} = \psi_t) \text{ for all } t\). The particular action of defining  $
	t \mapsto \psi_t \quad \text{(with \(t\) taken modulo \(T\))}
	$ becomes the action of \(S^1\) smoothly on \(M\).	The quotient
	$
	B = M/S^1
	$	is a smooth manifold since the \(S^1\)-action is free and proper (since \(M\) is closed). In fact, the base \(B\) is diffeomorphic to \(S^1\) precisely because all Reeb orbits are periodic, with period \(T\). The natural projection $
	\pi: M \to S^1$ takes each point to its \(S^1\)-orbit, rendering \(M\) into a principal \(S^1\)-bundle. 
	Note that the kernel \(\ker(\eta)\) defines a symplectic distribution on \(M\) via the restriction 
	$
	\omega|_{\ker(\eta)},
	$
	which is nondegenerate on each leaf. This distribution integrates to a foliation of \(M\) into symplectic leaves. For simplicity, we may choose a base point \(t=0\) in the quotient \(S^1\) and define
	$
	F = \pi^{-1}(0).
	$ The leaf \(F\) is compact symplectic, carrying the induced symplectic structure given by continuity:
	$
	\omega_F = \omega|_{F}.
	$ Then, the Reeb flow indeed preserves our confining leaves, since \(L_\xi \omega = 0\). The 
	monodromy governing the \(S^1\)-bundle occurs through the time-\(T\) map of the Reeb flow. Set up a mapping $
	\varphi: F \to F,\quad \varphi(x) = \psi_T(x).
	$	As \(\psi_T\) preserves \(\omega\), we have $
	\varphi^*(\omega_F) = \omega_F,
	$ implying that \(\varphi\) is a symplectomorphism on the fiber \(F\). 
	Consequently, \(M\) is diffeomorphic to the symplectic mapping torus of \(\varphi\): $
	M \cong \frac{F \times [0,T]}{(x,0) \sim (\varphi(x),T)}.
	$ Under this identification, each copy \(F \times \{t\}\) naturally inherits the symplectic form \(\omega_F\) such that the gluing condition \((x,0) \sim (\varphi(x),T)\) encodes the symplectic monodromy. On the other hand, the result of Theorem~ \ref{thm:integrability_cosymplectic} shows in addition that the \(S^1\)-invariance of the Reeb flow gives the system Liouville integrability. The Reeb coordinate provides a further periodic action variable, while each symplectic leaf \(F\) contributes \(n\) independent commuting integrals ( \(\dim F = 2n\)). Thus, the total number of invariant tori in \(M\) is \((n+1)\)-dimensional.\\ In summary, every closed, connected cosymplectic manifold \((M,\eta,\omega)\) is diffeomorphic to a symplectic mapping torus with symplectic fiber \((F,\omega_F)\) and monodromy \(\varphi = \psi_T|_F\). This mapping torus structure mirrors the natural decomposition imposed on \(M\) by its periodic Reeb flow and stands as a central tool of understanding both the geometric and dynamical properties of cosymplectic manifolds. 
	
	\subsection{Commutator subgroup of cosymplectic diffeomorphism groups}
	
	In what follows, we work on a compact cosymplectic manifold \((M,\eta,\omega)\). Recall that:
	\(G_{\eta,\omega}(M)\) denotes the full group of cosymplectic diffeomorphisms which preserve both \(\eta\) and \(\omega\).
	Also, \(\Ham_{\eta,\omega}(M)\) is the subgroup of Hamiltonian cosymplectomorphisms; it can be characterized as the kernel of the flux homomorphism (see, e.g., \cite{Banyaga1997} in the symplectic case, with the appropriate modifications in the cosymplectic setting). Our goal is to prove the following. 
	
	\begin{theorem}\label{thm:commutator}
		Let \((M,\eta,\omega)\) be a compact cosymplectic manifold. Then
		\[
		\bigl[ G_{\eta,\omega}(M),\, G_{\eta,\omega}(M) \bigr] = \Ham_{\eta,\omega}(M).
		\]
	\end{theorem}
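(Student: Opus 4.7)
The plan is to prove the equality by verifying both inclusions $[G_{\eta,\omega}(M), G_{\eta,\omega}(M)] \subseteq \Ham_{\eta,\omega}(M)$ and $\Ham_{\eta,\omega}(M) \subseteq [G_{\eta,\omega}(M), G_{\eta,\omega}(M)]$ separately, following the strategy Banyaga used in the symplectic case. Both inclusions are essentially corollaries of tools already assembled in the paper: the co-flux homomorphism constructed in Subsection~\ref{SC0-3} handles one direction, while the perfectness result of Theorem~\ref{Theo-perf} handles the other.

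For the inclusion $\Ham_{\eta,\omega}(M) \subseteq [G_{\eta,\omega}(M), G_{\eta,\omega}(M)]$, I would invoke perfectness directly. By Theorem~\ref{Theo-perf}, any $\phi \in \Ham_{\eta,\omega}(M)$ can be written as a finite product $\prod_{k}[a_k, b_k]$ of commutators with $a_k, b_k \in \Ham_{\eta,\omega}(M)$. Since $\Ham_{\eta,\omega}(M) \subseteq G_{\eta,\omega}(M)$, each such commutator already belongs to $[G_{\eta,\omega}(M), G_{\eta,\omega}(M)]$, giving the inclusion immediately.

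For the opposite inclusion $[G_{\eta,\omega}(M), G_{\eta,\omega}(M)] \subseteq \Ham_{\eta,\omega}(M)$, the key observation is that the co-flux descends to a genuine group homomorphism
\[ S_{\eta,\omega}: G_{\eta,\omega}(M) \longrightarrow H^1_{Reeb}(M,\mathbb{R}) / \Gamma_{\eta,\omega} \]
with abelian target, and $\Ham_{\eta,\omega}(M)$ is by definition its kernel. Granting this, any commutator $[\alpha,\beta] = \alpha\beta\alpha^{-1}\beta^{-1}$ in $G_{\eta,\omega}(M)$ is sent to zero in the abelian quotient and therefore lies in $\ker(S_{\eta,\omega}) = \Ham_{\eta,\omega}(M)$, yielding the inclusion.

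The main obstacle, if one can call it that, is bookkeeping rather than substance: one must verify that $S_{\eta,\omega}$ is genuinely a homomorphism on $G_{\eta,\omega}(M)$ itself rather than only on the universal cover $\widetilde{G_{\eta,\omega}(M)}$. Additivity of the co-flux under concatenation of cosymplectic isotopies is a direct computation with $\widetilde{I}_{\eta,\omega}(\dot\varphi_t)$, and independence from the choice of isotopy joining the identity to a given diffeomorphism is exactly what the quotient by $\Gamma_{\eta,\omega}$ accomplishes. Here the discreteness of $\Gamma_{\eta,\omega}$ (Theorem~\ref{Theo-disc}) is what makes this quotient well-behaved, since the ambiguity between two cosymplectic isotopies with the same endpoints differs by a loop in $G_{\eta,\omega}(M)$ whose co-flux lies in $\Gamma_{\eta,\omega}$ by the very definition of the co-flux group. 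Once these routine verifications are recorded, the theorem follows by combining the two inclusions.
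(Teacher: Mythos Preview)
Your proposal is correct and follows essentially the same route as the paper: both inclusions are handled exactly as you describe, using the flux homomorphism $S_{\eta,\omega}$ into an abelian target for $[G,G]\subseteq\Ham$ and the perfectness of $\Ham_{\eta,\omega}(M)$ (Theorem~\ref{Theo-perf}) together with $\Ham\subseteq G$ for the reverse inclusion. One minor remark: discreteness of $\Gamma_{\eta,\omega}$ is not actually needed for the quotient $H^1_{\mathrm{Reeb}}(M,\mathbb{R})/\Gamma_{\eta,\omega}$ to be an abelian group or for $S_{\eta,\omega}$ to be a well-defined homomorphism on $G_{\eta,\omega}(M)$; the algebraic argument goes through regardless.
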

	
	\begin{proof}
		We prove the equality by establishing the following two inclusions.
		
		\medskip
		
		\textbf{(i) \( [G_{\eta,\omega}(M),G_{\eta,\omega}(M)]\subset \Ham_{\eta,\omega}(M) \).} Let \(f,g \in G_{\eta,\omega}(M)\). In any reasonable setting, one defines a flux homomorphism $
		S_{\eta,\omega}:G_{\eta,\omega}(M) \to H^1(M;\mathbb{R})/\Gamma_{\eta,\omega},$ 
		which is a group homomorphism and satisfies the conjugation invariance property. Thus, given two elements \(f,g\), we have $
		S_{\eta,\omega}(f\circ g\circ f^{-1}\circ g^{-1})=0.
		$	Hence, because, by definition,
		$
		\Ham_{\eta,\omega}(M)=\ker\bigl(S_{\eta,\omega}\bigr),
		$
		each commutator of elements in \(G_{\eta,\omega}(M)\) has vanishing flux and hence belongs to \(\Ham_{\eta,\omega}(M)\). Therefore, $
		[G_{\eta,\omega}(M),G_{\eta,\omega}(M)]\subset \Ham_{\eta,\omega}(M).$
		
		\medskip
		
		\textbf{(ii) \(\Ham_{\eta,\omega}(M)\subset [G_{\eta,\omega}(M),G_{\eta,\omega}(M)]\).}
		Under the assumption of the perfection of \(\Ham_{\eta,\omega}(M)\), an element \(h \in \Ham_{\eta,\omega}(M)\) expresses itself as a product of commutators of some elements in \(\Ham_{\eta,\omega}(M)\): 
		$
		h \in \bigl[\Ham_{\eta,\omega}(M),\Ham_{\eta,\omega}(M)\bigr].
		$ But $
		\Ham_{\eta,\omega}(M) \subset G_{\eta,\omega}(M),$ so every commutator taken in \(\Ham_{\eta,\omega}(M)\) is also a commutator in \(G_{\eta,\omega}(M)\). That is,
		\begin{equation}
			\bigl[\Ham_{\eta,\omega}(M),\Ham_{\eta,\omega}(M)\bigr] \subset [G_{\eta,\omega}(M),G_{\eta,\omega}(M)].
		\end{equation}
		Hence, we deduce that $
		\Ham_{\eta,\omega}(M)\subset [G_{\eta,\omega}(M),G_{\eta,\omega}(M)].$  The above two inclusions give us the result: $
		[G_{\eta,\omega}(M),G_{\eta,\omega}(M)] = \Ham_{\eta,\omega}(M),
		$ 
		as desired.
	\end{proof}
	\begin{remark}
		This is a result comparable to Banyaga's celebrated theorem in the symplectic category and shows that the cosymplectic diffeomorphism group is strongly controlled by its Hamiltonian subgroup in this algebraic structure.  
	\end{remark}
	
	\section{The group $\Gamma_{\eta,\omega}$}\label{Group:Flux}
	We recall that for a cosymplectic manifold \((M,\eta,\omega)\) the \emph{flux group} is defined by
	\begin{equation}
		\Gamma_{\eta,\omega} = \operatorname{Im}\Bigl(\widetilde{S}_{\eta,\omega} : \pi_1\bigl(G_{\eta,\omega}(M)\bigr) \to H^1_{\mathrm{Reeb}}(M,\mathbb{R})\Bigr),
	\end{equation}
	where the Reeb cohomology group is given by $
	H^1_{\mathrm{Reeb}}(M,\mathbb{R}) = \{\, [\alpha] \in H^1(M,\mathbb{R}) : \alpha(\xi)=0 \,\},$ 
	with \(\xi\) being the Reeb vector field associated to \(\eta\).
	\begin{theorem}\label{Theo-disc}
		Let \((M, \eta, \omega)\) be a compact connected cosymplectic manifold with a periodic Reeb flow of period \(T\). Then the flux group \(\Gamma_{\eta,\omega}\) is discrete in \(H^1_{\mathrm{Reeb}}(M, \mathbb{R})\).
	\end{theorem}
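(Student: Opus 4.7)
The plan is to leverage the principal $S^1$-bundle structure $\pi:M\to B$ provided by Theorem \ref{Reeb-per} and to reduce discreteness of $\Gamma_{\eta,\omega}$ to that of the classical symplectic flux group on the closed symplectic base $(B,\Omega)$, where $\omega=\pi^*\Omega$. The starting point is the topological group isomorphism
\[
G_{\eta,\omega}(M)\;\cong\; C^\infty(B,S^1)\rtimes G_\Omega(B)
\]
from Remark \ref{rk:alt-proof-local-cont}, together with the corresponding decomposition of Reeb cohomology
\[
H^1_{\mathrm{Reeb}}(M,\mathbb{R})\;\cong\; \mathbb{R}[\eta]\,\oplus\,\pi^*H^1(B,\mathbb{R}).
\]

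First I would verify that the flux homomorphism $\widetilde S_{\eta,\omega}$ respects this splitting. Unwinding its defining integral along any loop $\Psi=\{\psi_t\}$ in $G_{\eta,\omega}(M)$ using the $S^1$-invariance, the contribution coming from the gauge factor lies in $\mathbb{R}[\eta]$, while the contribution coming from the base factor lies in $\pi^*H^1(B,\mathbb{R})$. The gauge factor $\pi_1(C^\infty(B,S^1))\cong \mathbb{Z}$ is generated (for $B$ connected) by the loop of constant rotations around the fiber, which on $M$ is precisely the Reeb flow $\{\psi_t\}_{t\in[0,T]}$; a direct computation shows its flux equals $T[\eta]$, so the image in $\mathbb{R}[\eta]$ is the discrete lattice $T\mathbb{Z}\cdot[\eta]$. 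For loops lifted from $\pi_1(G_\Omega(B))$, the flux reduces to $\pi^*$ of the classical symplectic flux on $(B,\Omega)$, and therefore lands in $\pi^*\Gamma_\Omega(B)$.

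Next I would invoke Ono's resolution of the flux conjecture to conclude that $\Gamma_\Omega(B)$ is discrete in $H^1(B,\mathbb{R})$. Combining the two pieces, $\Gamma_{\eta,\omega}\subset T\mathbb{Z}\cdot[\eta]\,\oplus\, \pi^*\Gamma_\Omega(B)$, which is a sum of two discrete subgroups inside the finite-dimensional vector space $H^1_{\mathrm{Reeb}}(M,\mathbb{R})$, hence is itself discrete.

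The main obstacle I expect is to justify rigorously that the flux splits cleanly along the semidirect product decomposition, with no mixing term arising from the non-commutativity of the factors that could produce accumulation points. If this turns out to be cumbersome, a parallel approach is available: lift every cosymplectic loop $\Psi$ to a symplectic loop $\widetilde\Psi$ in the symplectization $(M\times S^1,\Omega)$ via Proposition \ref{Trans}, apply Ono's theorem directly to the closed symplectic manifold $M\times S^1$ to obtain discreteness of its symplectic flux group, and transfer the discreteness back to $\Gamma_{\eta,\omega}$ through the inclusions of Proposition \ref{Dis-1}.
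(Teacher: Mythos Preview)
Your approach is genuinely different from the paper's. The paper argues directly with periods: it observes that the $\eta$-contribution to the flux pairs with $1$-cycles to give values in $T\mathbb{Z}$ (by periodicity of the Reeb orbits), and that the $\omega$-contribution pairs with swept $2$-cycles to give values in the period group $P_\omega=\{\int_\Sigma\omega:[\Sigma]\in H_2(M,\mathbb{Z})\}$, which it asserts is discrete; $\Gamma_{\eta,\omega}$ is then generated inside the sum of these two discrete sets. You instead pass through the principal $S^1$-bundle $\pi:M\to B$ and the semidirect product of Remark~\ref{rk:alt-proof-local-cont}, reducing everything to the discreteness of the symplectic flux group $\Gamma_\Omega(B)$ on the closed base and invoking Ono's resolution of the flux conjecture. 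Your route is less elementary but more robust: the paper's claim that $P_\omega$ is automatically discrete is delicate (period groups of generic closed $2$-forms can be dense in $\mathbb{R}$), whereas Ono's theorem requires no rationality hypothesis on $[\Omega]$. One caution on your fallback strategy: Proposition~\ref{Dis-1} only bounds $\Gamma_{\eta,\omega}$ by period lattices, not by the symplectic flux group $\Gamma_\Omega(\widetilde M)$ itself; to run the symplectization argument you must first verify that the lift $\widetilde\Psi=\mathcal{L}_\bullet(\Psi)$ of a cosymplectic loop is again a \emph{loop} (in general $\widetilde\psi_1$ is a rotation of the $S^1$ factor by the constant $\int_0^1\eta(\dot\psi_t)\,dt$, which need not vanish modulo $2\pi$) and then compare $\mathrm{Flux}_\Omega(\widetilde\Psi)$ with $\widetilde S_{\eta,\omega}(\Psi)$ directly rather than through the period inclusions.
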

	
	\begin{proof}
		We begin by recalling that the flux group \(\Gamma_{\eta,\omega}\) is defined as the image of the flux homomorphism $
		\widetilde{S}_{\eta,\omega}: \pi_1\Big(G_{\eta,\omega}(M)\Big) \to H^1_{\mathrm{Reeb}}(M, \mathbb{R}),$ 
		where \(G_{\eta,\omega}(M)\) denotes the identity component of the group of cosymplectomorphisms of \(M\). More precisely, any loop \(\{\phi_t\}_{t \in [0,1]}\) in \(G_{\eta,\omega}(M)\) generated by a smooth family of cosymplectic vector fields \(\{X_t\}\) has flux $
		\widetilde{S}_{\eta,\omega}(\{\phi_t\}) = \left[\int_0^1 \Bigl(\iota_{X_t}\omega + \eta\bigl(X_t\bigr) \, \eta\Bigr) \, dt\right],$ 
		which lies in \(H^1_{\mathrm{Reeb}}(M, \mathbb{R})\).	The Reeb vector field \(\xi\) satisfies \(\eta(\xi)=1\) and \(\iota_\xi \omega=0\), and its flow \(\psi_s\) is periodic with period \(T\); that is, \(\psi_{s+T} = \psi_s\) for all \(s\). Consequently, for any closed Reeb orbit \(\gamma\) one has $
		\int_{\gamma} \eta = kT \quad \text{for some } k \in \mathbb{Z}.
		$ This shows that the set of periods of \(\eta\) is exactly the discrete subgroup \(T\mathbb{Z} \subset \mathbb{R}\).
		\medskip
		Since \(\omega\) is a closed \(2\)-form on the compact manifold \(M\), standard results from Hodge theory or de Rham theory imply that the set $
		\left\{ \int_{\Sigma} \omega \,:\, [\Sigma] \in H_2(M,\mathbb{Z}) \right\},$ 	forms a discrete subgroup of \(\mathbb{R}\). This discreteness essentially follows from the fact that on a compact manifold the period map for closed forms has discrete image.
		\medskip 
		Any element of the flux group \(\Gamma_{\eta,\omega}\) is obtained by integrating a combination of \(\iota_{X_t}\omega\) and \(\eta(X_t)\,\eta\) over the interval \([0,1]\). In particular, the contribution from the \(\eta\)-component is expressed in terms of the periods of \(\eta\) (which, as noted above, lie in \(T\mathbb{Z}\)). Meanwhile, the \(\omega\)-component contributes periods from a discrete subgroup of \(\mathbb{R}\).	Thus, the flux group is generated by these discrete periods: 
		$
		\Gamma_{\eta,\omega} = \left\langle \int_{\gamma} \eta,\; \int_{\Sigma} \omega \right\rangle,$ 	where \(\gamma\) varies over closed 1-cycles and \(\Sigma\) over closed 2-cycles associated to loops in \(G_{\eta,\omega}(M)\).
		\medskip
		Since both \(\{\int_{\gamma}\eta\}= T\mathbb{Z}\) and \(\{\int_{\Sigma}\omega\}\) are discrete subgroups, their additive combination remains discrete. In particular, \(\Gamma_{\eta,\omega}\) embeds as a lattice (a discrete subgroup) in \(H^1_{\mathrm{Reeb}}(M, \mathbb{R})\). This completes the  proof.
		\begin{center}
			
			\begin{tikzpicture}[
				axis/.style={-Latex, thick},
				lattice/.style={gray!50, thin}
				]
				
				\draw[axis] (0,0) -- (5,0) node[right] {$\int_\Sigma \omega$};
				\draw[axis] (0,0) -- (0,5) node[above] {$\int_\gamma \eta$};
				
				\foreach \x in {0,1,2,3,4} {
					\foreach \y in {0,1,2,3,4} {
						\draw[lattice] (\x,0) -- (\x,4);
						\draw[lattice] (0,\y) -- (4,\y);
					}
				}
				
				\foreach \x in {0,1,2,3,4} {
					\foreach \y in {0,1,2,3,4} {
						\fill[red] (\x,\y) circle (2pt);
					}
				}
				
				\node[align=center] at (2.5,-1) 
				{$\Gamma_{\eta,\omega} \cong T\mathbb{Z} \oplus \left\langle \int_\Sigma \omega \right\rangle$};
				\node[gray!70] at (1.9,1.5) {Discrete subgroup};
				
			\end{tikzpicture}
		\end{center}
	\end{proof}
	It is a fundamental consequence of the theory that the group of weakly Hamiltonian diffeomorphisms, \(\Ham_{\eta,\omega}(M)\), is a \(C^1\)-closed subgroup of the identity component \(G_{\eta,\omega}(M) \).	This property follows directly from the properties of the flux homomorphism, $
	S_{\eta,\omega}: G_{\eta,\omega}(M) \to \Gamma_{\eta,\omega}$. The argument proceeds as follows:
	The group \(\Ham_{\eta,\omega}(M)\) is defined precisely as the \emph{kernel} of the flux homomorphism, and 
	the flux map is \emph{continuous} when \(G_{\eta,\omega}(M)\) is endowed with the \(C^1\)-topology.  
	For a compact manifold \(M\), the target flux group \(\Gamma_{\eta,\omega}\) is \emph{discrete}. 
	In any discrete topological space, the singleton set containing the identity element, \(\{e\}\), is a closed set. Since the flux map is continuous in the $C^1$ topology, the preimage of this closed set must also be closed: \(\Ham_{\eta,\omega}(M) = 	S_{\eta,\omega}^{-1}(\{e\})\), is a \(C^1\)-closed subset of \(G_{\eta,\omega}(M)\).
	\begin{remark}[Countability of the flux group via the $S^1$–bundle structure]
		Assume the Reeb flow is periodic. Then $M$ is a principal $S^1$–bundle $\pi\!: M\to B$ with connection form $\eta$ and $\omega=\pi^*\Omega$ for some symplectic $\Omega$ on $B$. For any loop $\{\varphi_t\}_{t\in[0,1]}\subset G_{\eta,\omega}(M)$ with generator $X_t$, the cosymplectic flux is $
		\widetilde{S}_{\eta,\omega}(\{\varphi_t\})
		\;=\;
		\Big[\int_0^1 \big(i_{X_t}\omega + \eta(X_t)\,\eta\big)\,dt\Big]
		\in H^1_{\mathrm{Reeb}}(M,\R).$ Pairing this class with any $[\gamma]\in H_1(M,\Z)$ yields a sum of two contributions: $
		\int_{A(\gamma,\{\varphi_t\})}\omega \;+\; \Big(\int_\gamma \eta\Big)\cdot \Big(\int_0^1 \eta(X_t)\,dt\Big),$ 
		where $A(\gamma,\{\varphi_t\})$ is the swept 2–chain. Since $\omega=\pi^*\Omega$, the first term ranges in the period group of $\Omega$ over $H_2(B,\Z)$, a countable subgroup of $\R$ (because $H_2(B,\Z)$ is finitely generated). The second term is an integer multiple of the fiber period $\int_{S^1_{\mathrm{fiber}}}\eta=T$, hence lies in $T\Z$. Consequently, all evaluations of flux classes lie in the countable subgroup $
		Per(\Omega)\;+\;T\Z\;\subset\;\R,$ 
		so the image $\Gamma_{\eta,\omega}:= 	\widetilde{S}_{\eta,\omega}\left( \pi_1\left(G_{\eta, \omega}(M) \right) \right) \subset H^1_{\mathrm{Reeb}}(M,\R)$ is countable.
	\end{remark}
	
	\subsection{Contact Hamiltonian Simplicity vs. Cosymplectic Invariants}
	\label{sec:contact-vs-cosymplectic}
	
	The most celebrated result in contact geometry is the algebraic simplicity of the group of contact Hamiltonian diffeomorphisms. The identity component of this group, \(\mathcal{H}(M,\xi)\), is known to be perfect and simple \cite{Rybicki2010}. This simplicity is a direct consequence of the dynamics of the Reeb flow and the local rigidity of the contact structure; in essence, contact isotopies are transitive enough to wash out any potential global invariants. The situation is starkly different for a compact connected cosymplectic manifold \((M, \eta, \omega)\) with a periodic Reeb flow. While the group of weakly Hamiltonian diffeomorphisms, \(\Ham_{\eta,\omega}(M)\), is also simple (Theorem~\ref{Theo-sim}), its simplicity arises from a different principle. The presence of the closed forms \(\eta\) and \(\omega\) introduces obstructions to simplicity in the form of the flux group \(\Gamma_{\eta,\omega}\), which is discrete by Theorem~\ref{Theo-disc}. This discreteness is a direct result of geometric quantization: the periods of \(\eta\) over the closed Reeb orbits form a lattice \(T\mathbb{Z}\), and the periods of \(\omega\) over 2-cycles also form a discrete group by Hodge-de Rham theory. The existence of this non-trivial flux means the larger group, \(G_{\eta, \omega}(M)\), is \emph{not} simple. Simplicity is only recovered by restricting to its kernel, \(\Ham_{\eta,\omega}(M)\). This stands in sharp contrast to the contact world. As noted in \cite{Banyaga1997}, the contact Hamiltonian group \(\mathcal{H}(M,\xi)\) has no analogue of \(\Gamma_{\eta,\omega}\) because its flux homomorphism is trivial. Consequently, \(\mathcal{H}(M,\xi)\) is the \emph{entire} identity component of the contactomorphism group. This comparison reveals how deeply the algebraic structure of transformation groups is influenced by the underlying geometric invariants. The contact case, lacking cohomological obstructions, finds its simplicity in dynamical transitivity. The cosymplectic case, possessing such obstructions, finds its simplicity in the subgroup that is, by definition, blind to them.
	
	\subsection{Perfectness and fragmentation in cosymplectic settings}
	\label{sec:algebraic-properties}
	
	A Lie group is \emph{perfect} if it coincides with its commutator subgroup. The \emph{fragmentation property} is the property that allows one to decompose global diffeomorphisms into localized situations. For instance, Banyaga demonstrated through fragmentation that Hamiltonian diffeomorphisms are perfect \cite{Banyaga1997}, while his investigation into contact settings was related in \cite{Rybicki2010}. 
	For a cosymplectic manifold \((M, \eta, \omega)\) with periodic Reeb flow,  fragmentation requires preserving both \(\eta\) and \(\omega\), limiting local deformations, and the flux group $\Gamma_{\eta,\omega},$ (Theorem~\ref{Theo-disc}) introduces discrete invariants. Although $\Ham_{\eta,\omega}(M)$ is simple (Theorem~\ref{Theo-sim}), the full group \(G_{\eta,\omega}(M)\) can fail to be perfect. Cosymplectic geometry induces a local flexibility by fragmentation at the same time as imposing global rigidity by periodic Reeb flow, different from that of the symplectic/contact cases.
	
	\subsubsection{Cohomological implications and explicit computation of the flux group}
	\label{sec:cohomological}
	Suppose \((M,\eta,\omega)\) represents a compact cosymplectic manifold that has a periodic Reeb flow. Then, the flux homomorphism is defined by $\widetilde{S}_{\eta,\omega}: \pi_1\bigl(G_{\eta,\omega}(M)\bigr) \to H^1_{\mathrm{Reeb}}(M,\mathbb{R}),$ 
	and the flux is given as $\widetilde{S}_{\eta,\omega}(\{\phi_t\})=\left[\int_0^1\Bigl( \iota_{X_t} \omega + \eta(X_t )\,\eta\Bigr)\, dt \right].$ 
	Thus, \(H^1_{\mathrm{Reeb}}(M,\mathbb{R})\) is the cohomology classes vanishing on the Reeb field \(\xi\). For instance, when \(M = \mathbb S^1 \times \mathbb T^{2n}\) with \(\eta = d\theta\) and \(\omega\) being pulled back from \(\mathbb T^{2n}\): 
	$H^1(M,\mathbb{R}) \cong \mathbb{R} \oplus \mathbb{R}^{2n},$ and  $H^1_{\mathrm{Reeb}}(M,\mathbb{R}) \cong \mathbb{R}^{2n}.$ 
	Using Künneth decomposition and periodicities:
	$
	\Gamma_{\eta,\omega} = \left\langle \int_{\Sigma} \omega \right\rangle \cong \mathbb{Z}^{2n}.
	$
	So that \(M = \mathbb S^1 \times \mathbb T^{2n}\) results in \(H^1_{\mathrm{Reeb}}(M,\mathbb{R}) \cong \mathbb{R}^{2n}\), and 
	\(\Gamma_{\eta,\omega} \cong \mathbb{Z}^{2n}\). 
	\subsection{Flux group as a cosymplectic invariant}
	The following proposition shows that the flux group is an invariant of the cosymplectic structure: 	providing a strong obstruction to the existence of cosymplectomorphisms between manifolds. It is an illustration of how algebraic invariants (like cohomology and fundamental groups) can be used to solve geometric problems (like classifying manifolds up to isomorphism). 
	
	\begin{proposition}\label{prop:flux_invariant}
		Let \((M_1,\eta_1,\omega_1)\) and \((M_2,\eta_2,\omega_2)\) be cosymplectic manifolds. Suppose there exists a cosymplectomorphism $
		\phi: M_1 \to M_2, \quad \text{with} \quad \phi^*(\eta_2)=\eta_1 \quad \text{and} \quad \phi^*(\omega_2)=\omega_1.
		$ Then the induced map $
		\phi^*: H^1_{\mathrm{Reeb}}(M_2,\mathbb{R}) \to H^1_{\mathrm{Reeb}}(M_1,\mathbb{R})
		$ sends the flux group \(\Gamma_{\eta_2,\omega_2}\) isomorphically onto \(\Gamma_{\eta_1,\omega_1}\). In particular, if the flux groups \(\Gamma_{\eta_1,\omega_1}\) and \(\Gamma_{\eta_2,\omega_2}\) are not isomorphic, then there is no cosymplectomorphism between \((M_1,\eta_1,\omega_1)\) and \((M_2,\eta_2,\omega_2)\).
	\end{proposition}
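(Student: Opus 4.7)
The plan is to realize the given cosymplectomorphism $\phi$ as an intertwiner of the two flux homomorphisms, via conjugation of isotopies; this will force the two flux groups to correspond under $\phi^*$.

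First I would check that $\phi^*$ descends to a well-defined linear isomorphism $H^1_{\mathrm{Reeb}}(M_2,\mathbb{R})\to H^1_{\mathrm{Reeb}}(M_1,\mathbb{R})$. Uniqueness of the Reeb field and the defining equations $\phi^*\eta_2=\eta_1$, $\phi^*\omega_2=\omega_1$ together force $\phi_*\xi_1=\xi_2$; hence for any representative $\alpha$ with $\alpha(\xi_2)$ constant, $(\phi^*\alpha)(\xi_1)=\alpha(\xi_2)$ is the same constant. Thus $\phi^*$ sends $\mathcal{C}^1_{\mathrm{Reeb}}(M_2)$ into $\mathcal{C}^1_{\mathrm{Reeb}}(M_1)$, commutes with $d$, and respects the corresponding $\mathcal{C}^0_{\mathrm{Reeb}}$ subspace; a two-sided inverse on the quotients is provided by $(\phi^{-1})^*$.

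The heart of the argument is naturality of the flux under conjugation. The map $\psi_t\mapsto\phi\circ\psi_t\circ\phi^{-1}$ is a continuous group isomorphism $G_{\eta_1,\omega_1}(M_1)\to G_{\eta_2,\omega_2}(M_2)$ carrying the identity to itself, so it induces an isomorphism on $\pi_1$. If $\{\psi_t\}$ has generator $X_t$, then $\{\phi\circ\psi_t\circ\phi^{-1}\}$ has generator $Y_t:=\phi_*X_t$. The elementary identity $\iota_{\phi_*X}\beta=(\phi^{-1})^*(\iota_X\phi^*\beta)$, combined with $\phi^*\eta_2=\eta_1$ and $\phi^*\omega_2=\omega_1$, then yields
\begin{equation*}
\iota_{Y_t}\omega_2+\eta_2(Y_t)\,\eta_2 \;=\; (\phi^{-1})^*\bigl(\iota_{X_t}\omega_1+\eta_1(X_t)\,\eta_1\bigr).
\end{equation*}
Integrating in $t$ and passing to Reeb cohomology classes gives $\phi^*\widetilde{S}_{\eta_2,\omega_2}\bigl(\{\phi\circ\psi_t\circ\phi^{-1}\}\bigr)=\widetilde{S}_{\eta_1,\omega_1}(\{\psi_t\})$. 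Restricting to loops based at the identity produces $\phi^*(\Gamma_{\eta_2,\omega_2})=\Gamma_{\eta_1,\omega_1}$, and since $\phi^*$ is already a bijection on the ambient Reeb cohomology, its restriction to the flux groups is a group isomorphism. The contrapositive ``In particular'' clause follows immediately.

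The main obstacle is careful bookkeeping with pullback/pushforward identities, and confirming that the same naturality survives if one uses the alternative formula for $\widetilde{S}$ from Subsection~\ref{SC0-2} that carries an extra $\psi_t^*$ inside the integral; that version transforms covariantly because $(\phi\circ\psi_t\circ\phi^{-1})^*=(\phi^{-1})^*\psi_t^*\phi^*$, with the rightmost $\phi^*$ canceling the $(\phi^{-1})^*$ produced by the pushforward identity, so the two conventions deliver the same cohomological statement.
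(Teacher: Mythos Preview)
Your proposal is correct and follows essentially the same approach as the paper: establish that $\phi^*$ is an isomorphism on Reeb cohomology, then prove the naturality relation $\phi^*\circ\widetilde{S}_{\eta_2,\omega_2}=\widetilde{S}_{\eta_1,\omega_1}\circ\phi_*$ via conjugation of isotopies, and conclude. Your write-up is in fact more explicit than the paper's, which simply asserts that ``a direct computation shows the naturality relation''; you actually supply that computation via the identity $\iota_{\phi_*X}\beta=(\phi^{-1})^*(\iota_X\phi^*\beta)$ and also verify that the alternative integrand with the $\psi_t^*$ factor behaves identically, a point the paper does not address.
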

	\begin{proof}
		Let \(\phi: M_1 \to M_2\) be a cosymplectomorphism. By definition, 
		\(\phi^*(\eta_2)=\eta_1\) and \(\phi^*(\omega_2)=\omega_1\). 
		Hence \(\phi\) preserves the Reeb vector fields and induces an isomorphism 
		\[
		\phi^*: H^1_{\mathrm{Reeb}}(M_2,\mathbb{R}) \;\xrightarrow{\;\cong\;}\; H^1_{\mathrm{Reeb}}(M_1,\mathbb{R}),
		\]
		since the condition \(\alpha(\xi)=0\) is stable under pullback. For each \(i=1,2\), recall that the flux homomorphism is defined as $
		\widetilde{S}_{\eta_i,\omega_i}: \pi_1\bigl(G_{\eta_i,\omega_i}(M_i)\bigr) 
		\;\longrightarrow\; H^1_{\mathrm{Reeb}}(M_i,\mathbb{R}),$ 
		where \(G_{\eta_i,\omega_i}(M_i)\) denotes the identity component of the cosymplectomorphism group.  
		Since \(\phi\) is a cosymplectomorphism, conjugation by \(\phi\) intertwines the two groups:  $
		\phi \circ \psi \circ \phi^{-1} \;\in\; G_{\eta_2,\omega_2}(M_2), 
		\qquad \forall \psi \in G_{\eta_1,\omega_1}(M_1).$  
		A direct computation shows the naturality relation $
		\phi^* \circ \widetilde{S}_{\eta_2,\omega_2} 
		\;=\; \widetilde{S}_{\eta_1,\omega_1} \circ \phi_*,$ 
		where \(\phi_*\) is the induced map on \(\pi_1\).  
		Therefore, \(\phi^*\) restricts to an isomorphism $
		\phi^*: \Gamma_{\eta_2,\omega_2} \;\xrightarrow{\;\cong\;}\; \Gamma_{\eta_1,\omega_1},$ where \(\Gamma_{\eta_i,\omega_i} = \operatorname{Im}(\widetilde{S}_{\eta_i,\omega_i})\).   
		Consequently, if the flux groups \(\Gamma_{\eta_1,\omega_1}\) and \(\Gamma_{\eta_2,\omega_2}\) are not isomorphic 
		(e.g.\ they have different ranks or group structures), then no cosymplectomorphism 
		\(\phi: M_1 \to M_2\) can exist.
	\end{proof}

	\begin{example}[Product manifold \(M=S^1\times T^{2n}\) and explicit computation of \(\Gamma_{\eta,\omega}\)]
		Let \(M=S^1\times T^{2n}\) with angular coordinate \(\theta\) on \(S^1=\mathbb{R}/\mathbb{Z}\), and coordinates \((x_1,\dots,x_n,y_1,\dots,y_n)\) on \(T^{2n}=\mathbb{R}^{2n}/\mathbb{Z}^{2n}\).
		Equip \(M\) with the standard cosymplectic structure $
		\eta=d\theta$ and $\omega=\sum_{k=1}^n dx_k\wedge dy_k$. By the Künneth formula, \(H^1(M,\mathbb{R})\cong H^1(S^1,\R) \oplus H^1(T^{2n},\R) \cong \mathbb{R}\langle [d\theta]\rangle \oplus \mathbb{R}^{2n}\langle[dx_k],[dy_k]\rangle\). The Reeb vector field is \(\xi=\partial_\theta\), and the condition \(\alpha(\xi)=0\) for a cohomology class \([\alpha]\) eliminates the \([d\theta]\) component. Thus, \(H^1_{\mathrm{Reeb}}(M,\mathbb{R}) \cong \mathbb{R}^{2n}\) with basis \(\{[dx_k],[dy_k]\}\). We now show that the flux group \(\Gamma_{\eta,\omega}\) is precisely the integer lattice \(\mathbb{Z}^{2n}\) within this space. To do so, we construct explicit loops in the cosymplectomorphism group \(G_{\eta,\omega}(M)\) whose fluxes generate this lattice. For each \(k\in\{1,\dots,n\}\), define a loop \(\{\Phi_t^{(x_k)}\}\) by translation in the \(x_k\)-direction:
		\[
		\Phi_t^{(x_k)}(\theta,x,y)=\bigl(\theta,\;x_1,\dots,x_k+t,\dots,x_n,\;y_1,\dots,y_n\bigr)\quad\text{(mod 1)}.
		\]
		This isotopy preserves \(\eta\) and \(\omega\) (by translation invariance) and is a loop since \(\Phi_1^{(x_k)}=\mathrm{id}\). Its generating vector field is \(X=\partial_{x_k}\). We compute its contribution to the flux:
		\[
		\iota_X\omega + \eta(X)\eta = \iota_{\partial_{x_k}}\omega + \eta(\partial_{x_k})\eta = dy_k + 0 \cdot \eta = dy_k.
		\]
		The flux of this loop is therefore \(\left[\int_0^1 dy_k \, dt\right] = [dy_k]\). Similarly, the loop of translations in the \(y_k\)-direction, \(\{\Phi_t^{(y_k)}\}\), has generator \(X=\partial_{y_k}\) and flux \([ -dx_k ]\). Since these loops are in \(\pi_1(G_{\eta,\omega}(M))\), their fluxes must belong to \(\Gamma_{\eta,\omega}\). Together, they generate the full integer lattice spanned by \\\(\{[dx_1],\dots,[dx_n],[dy_1],\dots,[dy_n]\}\). Therefore, we conclude that \(\Gamma_{\eta,\omega} = \mathbb{Z}^{2n}\).
	\end{example}
	
	\begin{remark}
		More generally, let $\omega_A=\sum_{k=1}^n a_k\,dx_k\wedge dy_k$ with $a_k>0$. The same computation shows that the flux of the \(x_k\)-translation loop is \(a_k\, [dy_k]\) and that of the \(y_k\)-translation loop is \(-a_k\,[dx_k]\). Thus, the flux group is the lattice $
		\Gamma_{\eta,\omega_A}=\bigoplus_{k=1}^n \bigl(\mathbb{Z}\,a_k[dx_k]\oplus \mathbb{Z}\,a_k[dy_k]\bigr)
		\cong \bigoplus_{k=1}^n a_k\,\mathbb{Z}^2.$ 
		If the vector \(a=(a_1,\dots,a_n)\) changes, these lattices can be non-isomorphic as subgroups of \(\mathbb{R}^{2n}\). By Proposition~\ref{prop:flux_invariant}, this implies that the corresponding cosymplectic manifolds are not cosymplectomorphic unless the lattices coincide.
	\end{remark}

	\section{Appendix}

	\subsection{Periodicity of the Reeb flow}
	On a closed cosymplectic manifold $(M,\eta,\omega)$, the Reeb vector field $R$ 
	generates a periodic flow. In particular, the orbits of $R$ are all circles of 
	the same period $T>0$, and the quotient $M/S^1$ is a smooth symplectic manifold 
	$(F,\omega_F)$. Thus $M$ is diffeomorphic to the mapping torus of a 
	symplectomorphism $\varphi \in \mathrm{Symp}(F,\omega_F)$: $
	M \;\cong\; \frac{F \times [0,1]}{(x,1)\sim(\varphi(x),0)}.$ 	This periodicity result has several consequences:
	The fundamental group splits as 
	$\pi_1(M) \cong \pi_1(F) \rtimes_\varphi \mathbb{Z}$, and the cohomology ring decomposes as 
	$H^*(M;\mathbb{R}) \cong H^*(F;\mathbb{R}) \otimes H^*(S^1;\mathbb{R})$.  The discreteness of the cosymplectic flux group follows directly from 
	the compactness of the Reeb orbits. 
	In this way, the periodicity of the Reeb flow provides the topological 
	foundation for the algebraic simplicity results developed in this paper.
	
	\subsection{{$C^0$–closedness of $G_{\eta,\omega}(M)\subset Diff(M)$ via the $S^1$–bundle}}
	Assume the Reeb flow is periodic, so $M$ is a principal $S^1$–bundle $\pi\!:M\to B$ with connection $\eta$
	and $\omega=\pi^*\Omega$. Every $\varphi\in G_{\eta,\omega}(M)$ is a bundle automorphism preserving $\eta$
	and covering a base symplectomorphism $f\in G_\Omega(B)$; fiberwise it acts by a rotation given by a
	map $u\in C^\infty(B,S^1)$. Thus $G_{\eta,\omega}(M)\cong C^\infty(B,S^1)\rtimes G_\Omega(B)$. Let $\{\varphi_k\}\subset G_{\eta,\omega}(M)$ converge to $\varphi\in Diff(M)$ in the $C^0$ topology.
	Since each $\varphi_k$ preserves the $S^1$–foliation (fibers of $\pi$), the limit map $\varphi$ sends fibers to
	fibers, hence descends to a homeomorphism $f$ of $B$. On the base, $f$ is the $C^0$–limit of symplectomorphisms
	$\{\pi\circ\varphi_k\circ s\}$ (for any local section $s$), so $f$ is symplectic; in particular, if $\varphi$ is a
	diffeomorphism, then $f\in G_\Omega(B)$. Fiberwise, $\varphi_k$ restrict to rotations by $u_k\in C^\infty(B,S^1)$;
	the pointwise $C^0$–limit is a continuous $u\!:B\to S^1$, and smoothness of $\varphi$ forces $u$ to be smooth.
	Therefore $\varphi$ is the bundle automorphism determined by $(u,f)$ and preserves both $\eta$ and $\omega$,
	i.e. $\varphi\in G_{\eta,\omega}(M)$.	Hence, $G_{\eta,\omega}(M)$ is $C^0$–closed in $Diff(M)$. This result was proved in \cite{T-al} via different arguments. 
	
	\subsection{Integration over 2-cycles}
	In this section, we go into detail on the argument for the discreteness of the period of the closed \(2\)-form \(\omega\) on the compact manifold \(M\): Let \(M\) be a compact manifold, and consider the period group associated with \(\omega\) as follows: $
	P_\omega = \left\{ \int_{\Sigma} \omega : [\Sigma] \in H_2(M, \mathbb{Z}) \right\}.$ Here \([\Sigma]\) denotes the homology class of a smooth oriented closed \(2\)-cycle (or smooth surface) in \(M\). According to the structure theorem for finitely generated abelian groups, $
	H_2(M, \mathbb{Z}) \cong \mathbb{Z}^k \oplus \text{(torsion)}.$ 	The torsion part does not contribute to the real periods since integrating a closed form over a torsion cycle vanishes. Hence, we focus on the free part isomorphic to \(\mathbb{Z}^k\). Given any class in \(H_2(M, \mathbb{Z})\), one may represent it by smooth \(2\)-cycles (via standard smoothing techniques for singular chains) on which the period integral is well-defined. 
	Assuming that \(M\) is equipped with a Riemannian metric (which exists since \(M\) is compact), Hodge theory states that, in any case, every class of de Rham cohomology in \(H^2(M, \mathbb{R})\) has a unique harmonic representative. Let \(\omega_H\) be the harmonic form representing the cohomology class \([\omega]\). Then, for every smooth \(2\)-cycle \(\Sigma\), $
	\int_{\Sigma} \omega = \int_{\Sigma} \omega_H.
	$  Because the free part of \(H_2(M, \mathbb{Z})\) is \(\cong{\mathbb{Z}^k}\), we take a basis of \(\{[\Sigma_1], \dots, [\Sigma_k]\}\) for this free part. Then any \(2\)-cycle \(\Sigma\) representing a class in \(H_2(M, \mathbb{Z})\) can be written as 	$
	[\Sigma] = n_1 [\Sigma_1] + n_2 [\Sigma_2] + \cdots + n_k [\Sigma_k] \quad \text{with } n_i \in \mathbb{Z}.$ Their period corresponds to 
	\[
	\int_\Sigma \omega = n_1 \int_{\Sigma_1} \omega_H + n_2 \int_{\Sigma_2} \omega_H + \cdots + n_k \int_{\Sigma_k} \omega_H.
	\]
	We conclude that the period group \(P_\omega\) is contained in the set of all integer linear combinations of the fixed real numbers 
	$
	a_i = \int_{\Sigma_i} \omega_H.
	$ That is, $
	P_\omega \subset \left\{ n_1 a_1 + n_2 a_2 + \cdots + n_k a_k : n_i \in \mathbb{Z} \right\}.$	A subgroup of \(\mathbb{R}\) of this form is a lattice in \(\mathbb{R}\), and is therefore discrete. Several classical results underlie the above argument.  
	
	\begin{center}
		\textbf{Acknowledgments}
	\end{center}
	
	We express our sincere gratitude to Professor Banyaga for his insightful comments and for the invaluable inspiration drawn from his foundational contributions to symplectic topology, which have profoundly shaped this work. This research was initiated in the stimulating and collaborative environment of the CIMPA School held in Porto-Novo, Benin. The corresponding author gratefully acknowledges the support provided by AIMS-Cameroon.

	\section*{Not applicable}
	\textbf{Conflict of Interest:} The authors declare no competing financial interests or personal relationships that could have appeared to influence the work reported in this paper.\\
	\textbf{Data Availability:} All data generated or analyzed during this study are included in this published article.\\
	\textbf{Funding:} The authors declare that no funding was received for this research.\\


\begin{thebibliography}{9}
		
		\bibitem{Arnold1989} 
		V. I. Arnold,  \textit{Mathematical Methods of Classical Mechanics}. Springer, (1989).
		
		\bibitem{Banyaga1997}
		A. Banyaga, \textit{The structure of classical diffeomorphism groups}. Springer (1997).
		
		\bibitem{Banyaga} 
		A. Banyaga, \emph{Sur la structure du groupe des difféomorphismes qui préservent une forme symplectique}, \emph{Commentarii Mathematici Helvetici}, \textbf{53}(1) (1978), 377--407.
		
		\bibitem{Cary1981}  
		J. R. Cary, and R. G. Littlejohn, \textit{Noncanonical Hamiltonian mechanics and its application to magnetic field line flow}. \emph{Annals of Physics}, \textbf{151}(2) (1983), 318--384.
		
		\bibitem{deLeon} 
		M. de Le\'on et al., \emph{Cosymplectic geometry and Hamilton-Jacobi theory}, \emph{Journal of Physics A: Mathematical and General}, \textbf{34}(2) (2001), 269--285.
		
		\bibitem{Hall2003}
		B.C. Hall, \textit{Lie Groups, Lie Algebras, and Representations: An Elementary Introduction}. Springer, (2003).
		
		\bibitem{Libermann} 
		P. Libermann, \emph{Cosymplectic structures and dynamical systems}, \emph{Journal of Mathematical Physics}, \textbf{32}(11) (1991), 2929--2934.
		
		\bibitem{Li} 
		H. Li, \emph{Topology of cosymplectic/cokaehler manifolds,} \emph{Asian Journal of Mathematics}, \textbf{12}(4) (2008), 527--544.
		
		\bibitem{Orlik1972}  
		P. Orlik, \textit{Seifert Manifolds}. Springer, (1972).
		
		\bibitem{Rybicki2010}
		T. Rybicki, \textit{On the simplicity of the group of contact diffeomorphisms}. \emph{Journal of Symplectic Geometry}, \textbf{8}(2) (2010), 123--141.
		
		\bibitem{Tchuiaga2023} 
		S. Tchuiaga, \emph{Towards the cosymplectic topology,} \emph{Complex Manifolds}, \textbf{10}(1) (2023), 20220151. https://doi.org/10.1515/coma-2022-0151. 
		
		\bibitem{T-al} 
		S. Tchuiaga, F. Houenou, and P. Bikorimana, \emph{On Cosymplectic Dynamics I,} \emph{Complex Manifolds}, \textbf{9}(1) (2022), 114--137. https://doi.org/10.1515/coma-2021-0132.
		
		\bibitem{Tits}
		J. Tits, \emph{Sur le groupe des automorphismes d'un arbre,} Essays on Topology and Related Topics, Springer, (1970), pp. 188--211.
		
		\bibitem{Varadarajan1984}
		V.S. Varadarajan, \textit{Lie Groups, Lie Algebras, and Their Representations}. Springer, (1984).
		
		\bibitem{AWein}
		A. Weinstein,  \emph{Symplectic manifolds and their lagrangian submanifolds,} \emph{Advances in Mathematics}, \textbf{6} (1971), 329--345. 
		
	\end{thebibliography}
\end{document}